\newtheorem{theorem}{Theorem}[section]
\newtheorem{lemma}[theorem]{Lemma}
\newtheorem{proposition}[theorem]{Proposition}
\newtheorem{corollary}[theorem]{Corollary}
\theoremstyle{definition}
\newtheorem{remark}[theorem]{Remark}
\title{Removable singularities for 
degenerate elliptic Pucci operators}
 \author{Giulio Galise$^{\small 1}$ and Antonio Vitolo$^{\small 2}$}
\date{}
\begin{document}

\maketitle

 \hskip0.5cm \footnote{e-mail:{\tt  ggalise@unisa.it}}
Dipartimento di Matematica, {\it Sapienza Universit\`{a} di Roma},
\vskip0.05cm 
\hskip1.1cm Piazzale Aldo Moro, 5 - 00185 ROMA -  ITALY
\vskip0.1cm
 \hskip0.5cm \footnote{e-mail:{\tt  vitolo@unisa.it}}
Dipartimento di Matematica, {\it Universit\`{a} di Salerno},
\vskip0.05cm 
\hskip1.1cm via Giovanni Paolo II, 132 - 84084 Fisciano (SA) -  ITALY\\

\begin{abstract}
\noindent In this paper we introduce some fully nonlinear second order operators defined as weighted partial sums of the eigenvalues of the Hessian matrix, arising in geometrical contexts, with the aim to extend maximum principles and removable singularities results to cases of highly degenerate ellipticity.
\end{abstract}

\paragraph{AMS Classification:} 35J60,   35B50, 35B60, 35D40

\paragraph{Keywords:} degenerate elliptic equations, maximum principles, removable singularities, viscosity solutions

%%%%%%%%%%%%%%%%%%%%%%%%%%%%%%%%%%%%%%%%%%%%%%%%%%%%%%%%%%%%%%%%%%%%%%%%%%
%%%%%%%%%%%%%%%%%%%%%%%%%%%%%%%%%%%%%%%%%%%%%%%%%%%%%%%%%%%%%%%%%%%%%%%%%%
\section{Introduction}

%\noindent
In this paper we are concerned with Maximum Principle type results for viscosity solutions of fully nonlinear degenerate elliptic equations
\begin{equation}\label{eq1intr}
F(u,Du,D^2u)=f(x)
\end{equation}
in a bounded domain $\Omega\subset\mathbb R^n$. Here $Du$, $D^2u$ denote respectively the gradient and the Hessian matrix of the real valued function $u$ and  $f\in C(\Omega)$. The nonlinear structure assumptions on $F:\mathbb R\times\mathbb R^n\times{\cal S}^n\mapsto\mathbb R$ will be introduced below, see (\ref{SC}). \\
The main question addressed here, with the aim to deduce a corresponding removability result of singular sets, is the following: given a subset $E$ of the boundary $\partial\Omega$ 
, find conditions on the \lq\lq size\rq\rq\, of the \lq\lq singular\rq\rq \, set $E$ in order that the sign of a bounded subsolution of (\ref{eq1intr}) propagates from $\partial\Omega\backslash E$ inside $\Omega$, that is
\begin{equation}\label{eq2intr}
u\leq0\;\;\text{on}\;\;\partial\Omega\backslash E\;\;\text{and}\;\;\sup_\Omega u^+<\infty\;\;\Rightarrow\;\;u\leq0\;\;\text{in}\;\;\Omega.
\end{equation}
In a previous paper \cite{AGV} the authors dealt with the uniformly elliptic version of (\ref{eq1intr})-(\ref{eq2intr}), establishing sufficient conditions for the validity of (\ref{eq2intr}) based on the notion of capacity. In the spirit of Potential Theory they also proved  a corresponding removability singularities result. \\
These arguments apply to a class of pure second order degenerate equations whose prototypes are 
\begin{equation}\label{eq3intr}
{\cal P}^\pm_p(D^2u)=f(x),
\end{equation}
where ${\cal P}^\pm_p(X)$, $n\geq p\in\mathbb N$, are real mapping defined for $X \in {\cal S}^n$, respectively, as the maximal and minimal partial sums 
\begin{equation}\label{eq4intr}
\begin{split}{}
{\cal P}^+_p(X)&=\,e_{n-p+1}(X)+\ldots+e_{n}(X),\\ 
{\cal P}^-_p(X)&=\,e_{1}(X)+\ldots+e_{p}(X),\\
\end{split}
\end{equation}
of the ordered eigenvalues $e_{1}(X)\leq\ldots\leq e_{n}(X)$ of matrix $X$. \\
Equation (\ref{eq3intr}) has been deeply studied, in the general framework of their subequations theory, by Harvey and Lawson \cite{HL1},\cite{HL2},\cite{HL3}, to which we refer for a large numbers of properties of (\ref{eq4intr}) as well as to  the papers of Caffarelli, Li and Nirenberg \cite{CLNI} and \cite{CLN2}, which contain further maximum principles and removability results related to equation (\ref{eq3intr}).

\noindent Here we introduce a larger class of fully nonlinear possibly degenerate operators, for a positive integer $p \le n$, which we call the Pucci maximal and minimal operators of order $p$, respectively: 
\begin{equation}\label{Pucci-p-eq}
\begin{split}{}
{\cal P}^+_{\lambda,\Lambda|p}(X) &=\, \Lambda \sum_{i=n-p+1}^n e_i^+(X)-\lambda \sum_{i=n-p+1}^n e_i^-(X),\\
{\cal P}^-_{\lambda,\Lambda|p}(X) &=\, \lambda \sum_{i=1}^p e_i^+(X)-\Lambda \sum_{i=1}^p e_i^-(X)\,.
\end{split}
\end{equation}
We notice that, if $\lambda=\Lambda=1$, we obtain the previous operators ${\cal P}^\pm_{1,1|p}={\cal P}^\pm_p$ of Harvey and Lawson \cite{HL1}, a sort of truncated Laplacians,  and ultimately the Laplace operator when $p=n$. 
We refer to the next Section for a more detailed description of degenerate Pucci operators of order $p$.\\
Regarding the nonlinearity $F$ in (\ref{eq1intr}), the following structure condition is assumed throughout the paper:
\begin{equation}\label{SC}
F(t,\eta,Y)-F(s,\xi,X) \le {\cal P}^+_{\lambda,\Lambda|p}(Y-X)+b|\eta-\xi|-c(t-s)
\end{equation}
for all $(s,t,\xi,\eta,X,Y) \in (\mathbb R)^2\times (\mathbb R^n)^2\times ({\cal S}^n)^2$, where $b$ and $c$ are non negative constant and $0<\lambda\leq\Lambda$ .\\ As usual, eventually adding a non-zero term in $f(x)$, we will always impose $F(0,0,0)=0$. Making these assumptions, we think to the maximal equation
\begin{equation}\label{eq5intr}
{\cal P}^+_{\lambda,\Lambda|p}(D^2u)+b|Du|-cu=f(x)\,,\quad b\geq0,\,c\geq0,
\end{equation}
as our model.\\
We  also point out here that we can extend the notion of fundamental solution, which is well known for the standard Laplace and Pucci operators, see \cite{ASiSm},\cite{Lab1}. Setting
\begin{equation}\label{exponent}
\alpha^* = \frac{\lambda}{\Lambda}(p-1) -1\geq0, 
\end{equation}
the functions
\begin{equation}
\Phi_{\alpha^*}(x)= \left\{\begin{array}{ll}
|x|^{-\alpha^*} & \alpha^*  > 0\\
\log(\frac{|x|}{R})^{-1} & \alpha^*  = 0
\end{array}\right.
\end{equation}
are classical positive solutions of the degenerate Pucci maximal equation ${\cal P}^+_{\lambda,\Lambda|p}(D^2\Phi)=0$ in the punctured space $\mathbb R^n\backslash \{0\}$ and in the punctured ball $B_R=\{0<|x| < R\}$, respectively, which blow up at the origin.\\
 This is the basic tool for extended maximum principles and removability results, which we investigate in the present paper for the wide class of degenerate elliptic equations (\ref{eq1intr}) satisfying (\ref{SC}),
shedding light on the fact that in the degenerate case $p<n$ with $c=0$ the gradient term is competitive with respect to the second order term and a large coefficient $b$ may invalidate the Maximum Principle. \\

\noindent We collect our principal results in the following statements.\\

%V6rev
\noindent In the first one, we get an extended maximum principle outside a set $E$ of type $F_\sigma$, i.e. countable union of compact sets.%V6rev
\begin{theorem}\label{th1}
Let $\lambda \le\Lambda$ positive real numbers and $p$ a positive integer such that $p \le n$, and suppose
\begin{equation}\label{alpha*}
\alpha^*=\frac\lambda\Lambda\,(p-1)-1\ge 0\,.
\end{equation}
 Let $\Omega$ be a bounded domain of $\mathbb R^n$ such that $\Omega \subseteq B_\delta=\{|x|<\delta\}$ and assume $\partial \Omega = E \cup E'$ with $E$ an
$F_\sigma$-set such that ${\rm{Cap}}_\alpha(E)=0$ for $\alpha \in [0,\alpha^*]$. Suppose that $f\in C(\Omega)$.  Assume that $u\in USC(\Omega)$ is bounded above and 
$${{\cal P}^+_{\lambda,\Lambda|p}}(D^2u)+b|Du|-cu\geq f(x)\quad in\,\,\Omega$$
in the viscosity sense, where $b$ and $c$ are non negative real numbers. \\
Let us consider the following cases: $(i)$ $c=0$; $(ii)$ $c>0$\,.\\
Case $(i)$. If $b\delta <\lambda p$, then
\begin{equation}\label{MPth1}
\sup_\Omega u\leq\limsup_{y\to E'}u(y)+C\left\|f^-\right\|_\infty\,,
\end{equation}
%V6rev 
where $C$ is a constant depending only on $\lambda$, $p$, $b$ and $\delta$.\\%V6rev
Case $(ii)$. For all $b \ge0$
\begin{equation}\label{MP+}
\sup_\Omega u\leq\limsup_{y\to E'}u^+(y)+C\left\|f^-\right\|_\infty\,,
\end{equation}
%V6rev
where now the constant $C$  depends also on $c$.\\%V6rev
Estimates (\ref{MPth1}) and (\ref{MP+}) hold true in general for $0\le \alpha < \alpha^*$ and also for $\alpha=\alpha^*$ if $b=0$. 
\end{theorem}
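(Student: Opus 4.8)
The plan is to run a comparison argument against a barrier built from the fundamental solution $\Phi_{\alpha^*}$, combined with a capacity exhaustion of the singular set $E$. First I would reduce to the case $E$ compact: since $E$ is $F_\sigma$, write $E=\bigcup_k E_k$ with $E_k$ compact and increasing; the $\alpha$-capacity of each $E_k$ is still zero, and if the estimate holds with $E'$ replaced by $\partial\Omega\setminus E_k$ (whose closure we can approach) for every $k$, a limiting argument recovers the statement for $E'$ itself. So fix a compact $K=E_k$ with $\mathrm{Cap}_\alpha(K)=0$ for the relevant $\alpha$.

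Next I would construct the barrier. Since $\mathrm{Cap}_\alpha(K)=0$, for every $\varepsilon>0$ there is a smooth (or $C^2$ away from $K$) nonnegative function $w_\varepsilon$ which is $\ge 1$ near $K$, has $\alpha$-energy (or sup-norm of the associated potential) less than $\varepsilon$, and crucially is a supersolution of ${\cal P}^+_{\lambda,\Lambda|p}(D^2 w)=0$ in $\Omega\setminus K$ — this is where $\Phi_{\alpha^*}$ enters, as $w_\varepsilon$ is assembled from translated/scaled copies of $|x|^{-\alpha}$ (or $\log$) placed on a covering of $K$ by balls of total small capacity; the identity ${\cal P}^+_{\lambda,\Lambda|p}(D^2\Phi_{\alpha^*})=0$ guarantees the supersolution property, and monotonicity in $\alpha\le\alpha^*$ lets one use the same power for all admissible $\alpha$. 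In Case $(i)$, to absorb the gradient term $b|Du|$ I would instead use an auxiliary comparison function of the form $v(x)=M-\theta\,g(|x|)$ on the ball $B_\delta$, where $g$ is chosen (e.g. an exponential or a suitable power) so that ${\cal P}^+_{\lambda,\Lambda|p}(D^2 v)+b|Dv|\le -\|f^-\|_\infty$ pointwise; the eigenvalue structure of ${\cal P}^+_{\lambda,\Lambda|p}$ on radial functions reduces this to a one-dimensional ODE inequality, and the hypothesis $b\delta<\lambda p$ is exactly the threshold making such $g$ exist on $B_\delta$ with controlled $\theta$, yielding the constant $C=C(\lambda,p,b,\delta)$. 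In Case $(ii)$, the zeroth-order term $-cu$ with $c>0$ supplies coercivity directly, so a bounded radial supersolution exists for every $b$, and one works with $u^+$.

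Then I would combine: for $\eta>0$ consider $u - \bigl(\limsup_{y\to E'}u(y)+C\|f^-\|_\infty+\eta\bigr) - K_\varepsilon w_\varepsilon - (\text{radial corrector})$ on $\Omega\setminus K$, where $K_\varepsilon$ is chosen larger than the (finite) oscillation bound of $u$ so that the competitor is negative near $K$ and near $E'$. This competitor is a subsolution of ${\cal P}^+_{\lambda,\Lambda|p}(D^2\cdot)+b|D\cdot|-c\,\cdot\ \ge f$ minus a supersolution, hence by the comparison principle / maximum principle on $\Omega\setminus K$ (a domain with boundary $E'\cup K$) it is $\le 0$ throughout $\Omega\setminus K$. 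Finally let $\varepsilon\to0$ (so $w_\varepsilon\to0$ off $K$), then $\eta\to0$, then undo the compact exhaustion $K=E_k\nearrow E$, to obtain $u\le \limsup_{y\to E'}u(y)+C\|f^-\|_\infty$ in all of $\Omega$. The statement about $\alpha=\alpha^*$ requiring $b=0$ reflects that when $\alpha^*=0$ the barrier is only logarithmic and has no room to absorb a gradient term, while for $\alpha<\alpha^*$ there is a strict power gain.

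The main obstacle I anticipate is the barrier construction at the borderline exponent together with the gradient term: verifying that the patched-together fundamental solutions remain viscosity supersolutions of the degenerate operator ${\cal P}^+_{\lambda,\Lambda|p}$ across the gluing regions (where $D^2 w_\varepsilon$ may jump) and simultaneously controlling the extra $b|Dw_\varepsilon|$ contribution — since for $p<n$ and $c=0$ the gradient term genuinely competes with the second-order term, the quantitative threshold $b\delta<\lambda p$ must be extracted carefully from the radial ODE analysis rather than being automatic.
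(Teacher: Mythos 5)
Your overall strategy (barrier built from the fundamental solution, radial corrector for the gradient term, comparison on the punctured domain, then pass to the limit) is the same as the paper's, but two steps as you describe them do not go through.

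First, the reduction to $E$ compact at the level of the theorem is not a harmless limiting argument. If $E=\bigcup_k E_k$ with $E_k$ compact and increasing, then $\partial\Omega\setminus E_k$ decreases to $E'$, but $\limsup_{y\to\partial\Omega\setminus E_k}u(y)$ need not converge to $\limsup_{y\to E'}u(y)$: a bounded $u$ can have $\limsup_{y\to x_k}u(y)=M>\limsup_{E'}u$ along a sequence $x_k\in E_{k+1}\setminus E_k$, in which case every intermediate estimate retains the constant $M$. The paper avoids this by building a single potential that blows up on \emph{all} of $E$ at once: it sets $v=\sum_m 2^{-m}c_m V^{\mu_m}_\alpha$ with $\mu_m$ an equilibrium-type measure on $E_m$ and $c_m$ normalizing the value at the fixed reference point $x_0$; then $u-\varepsilon v\equiv-\infty$ on all of $E$, so $\limsup_{\partial\Omega}(u-\varepsilon v)=\limsup_{E'}(u-\varepsilon v)\le\limsup_{E'}u$ with no exhaustion of the estimate needed. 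Second, your barrier $w_\varepsilon$ is required to be a supersolution of ${\cal P}^+_{\lambda,\Lambda|p}(D^2w)=0$ only, with the gradient term handled separately by the radial corrector. But when you add $b|Dw_\varepsilon|$ into the comparison, the barrier itself must absorb its own gradient: the paper shows that the Riesz potential $V^\mu_\alpha$ satisfies ${\cal P}^+_{\lambda,\Lambda|p}(D^2V^\mu_\alpha)+b|DV^\mu_\alpha|\le K$ with an explicit constant $K=K_{\lambda,\Lambda|p}(\alpha;b)$, obtained by splitting the integral at radius $\rho=(\lambda(p-1)-\Lambda(\alpha+1))/b$; this is precisely why the strict inequality $\alpha<\alpha^*$ is needed when $b>0$, and why $\alpha=\alpha^*$ is admissible only if $b=0$. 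You flagged this as an anticipated obstacle but did not resolve it, and without the quantitative $K$ the $\varepsilon\to0$ limit does not close. A related simplification: there is no need to patch together translated fundamental solutions over a covering of $K$ and then worry about viscosity across the gluing regions; the integral potential $V^\mu_\alpha=\Phi_\alpha*\mu$ is a single $C^\infty$ function off $E$, and subadditivity of ${\cal P}^+_{\lambda,\Lambda|p}$ lets one pass the operator inside the integral directly.
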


\noindent Using the terminology of \cite{AGV}, Theorem \ref{th1} asserts that subsets $E\subset\partial\Omega$ of vanishing $\alpha$-capacity are \emph{exceptional}, intending that the behavior of bounded subsolutions near such sets does not influence the sign inside $\Omega$. This kind of result, called Extended Maximum Principle (EMP for short), is well known with $\alpha=n-2$ in the case of the Laplace operator $\Delta u= {\cal P}^+_{1,1|n}(D^2u)$, see for instance \cite[Theorems 3.4, 3.5]{L}, \cite[Theorem 5.16]{HK}. \\
An analogous result has been proved in \cite{AGV} in the case of fully nonlinear uniformly elliptic operators of type ${\cal P}^+_{\lambda,\Lambda|n}(D^2u)+b|Du|$  with $\alpha^* =\frac\lambda\Lambda(n-1)-1$, without restrictions on the product $b\delta$, as well as with $\alpha^*=p-2$ for the partial Laplacian ${\cal P}^+_{1,1|p}(D^2u)$.\\
EMP can be used as a tool to deal with the removable singularities problem of finding what conditions can be put on the size of $E\subset\Omega$ to ensure that any bounded solution $u$ of (\ref{eq1intr}) in $\Omega\backslash E$ can be extended to a solution $\tilde u$ of the same equation in the whole $\Omega$. If we consider the Laplace equation a very classical result establishes that every bounded harmonic function in $\Omega\backslash E$, $E$ compact, has a continuation which is harmonic in $\Omega$ provided the Riesz capacity ${\rm Cap}_{n-2}(E)=0$, see \cite[Theorem 3.3]{L}, \cite[Theorem 5.18]{HK}.  \\
Removable singularities for elliptic equations were investigated by many authors in different frameworks. We list some papers referring to the references therein for a complete account to the problem. J. Serrin \cite{Serrin1} obtained  results in this direction for weak solutions of linear uniformly elliptic equation of second order with H$\rm\ddot{o}$lder continuous coefficients, while in \cite{Serrin2} a wide class of quasilinear equations in divergence form is considered, so generalizing the removability result of De Giorgi and Stampacchia \cite{DGS} concerning solutions of the minimal surfaces equations. In a well known paper of Brezis and Nirenberg \cite{BrNi} no a priori assumptions are made about the behavior of the solution $u$ near the singular set. Veron's monograph \cite{Ve} is a full description of topics about singular solutions.\\
In the fully nonlinear viscosity setting Labutin \cite{Lab2} introduced a suitable notion of capacity in order to give a complete characterization of removable sets for pure second order uniformly elliptic equations, while in \cite{AGV} sufficient conditions for the removability of solutions of nonlinear equations depending also on lower order terms are obtained by using the classical Riesz capacity: singular sets $E$ having ${\rm Cap}_\alpha(E)=0$ are removable for $0\leq\alpha<\frac\lambda\Lambda(n-1)-1$ and for $\alpha=\frac\lambda\Lambda(n-1)-1$ in the pure second order case. Regarding the degenerate partial Laplacians ${\cal P}^\pm_p$, a geometric approach was developed in \cite{HL2}. \\

%V6rev 
\noindent In the next theorem we generalize the above results to the wide class (\ref{eq1intr}) of degenerate elliptic equations, and the domain $\Omega$ has not to be necessarily bounded.%V6rev

\begin{theorem}\label{threm}
Let $\Omega$ be a domain of $\mathbb R^n$, and $E$ be a closed subset in the relative topology of $\Omega$. 
Suppose that $F=F(t,\xi,X)$ is  a degenerate elliptic operator satisfying the structure condition (\ref{SC}).\\ %V6
If $u$ is a viscosity solution in $\Omega\backslash E$, bounded on bounded sets, of equation
\begin{equation}\label{eq1threm}
F(u,Du,D^2u)=f(x),
\end{equation}
 where $f\in C(\Omega)$, and the $\alpha$-Riesz capacity ${\rm{Cap}}_\alpha(E)=0$ for $\alpha \in[0,\alpha^*]$, 
then $u$ can be extended to a solution $\tilde{u}\in C(\Omega)$ of $(\ref{eq1threm})$ in the whole $\Omega$. 
%V6rev provided $b\delta < \lambda p$ if $c=0$ and for all $b\ge 0$ if $c>0$.\\%V6rev
 This holds true in general for $0\le \alpha < \alpha^*$ and also for $\alpha=\alpha^*$ if $b=0$.
\end{theorem}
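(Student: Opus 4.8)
The strategy is the classical one for removable-singularity results: use the Extended Maximum Principle (Theorem \ref{th1}) together with the fundamental solution $\Phi_{\alpha^*}$ to squeeze the candidate extension between a harmonic-type majorant and minorant, and then verify that the squeezed function is a viscosity solution across $E$. Since $E$ is closed in $\Omega$ but possibly unbounded, and $\Omega$ itself may be unbounded, the plan is to localize: it suffices to prove the extension property on every ball $B$ with $\overline B\subset\Omega$, because solutions glue together consistently on overlaps and continuity is a local property. So fix such a ball $B$ and set $K=E\cap\overline B$, a compact set with ${\rm Cap}_\alpha(K)=0$.

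First I would construct the extension $\tilde u$. Since $u$ is bounded on $B\setminus K$, define $\tilde u(x)=\limsup_{y\to x,\,y\notin K}u(y)$ for $x\in K$ and $\tilde u=u$ elsewhere; this gives a bounded USC function. The key step is to show $\tilde u$ is a viscosity subsolution of \eqref{eq1threm} in all of $B$, and symmetrically that the corresponding lsc envelope $\underline u$ is a supersolution; since $u$ is continuous off $K$ and the equation has a comparison principle, $\tilde u=\underline u$ on $B$ and the common value is the desired $C(\Omega)$ solution. To prove $\tilde u$ is a subsolution, I would argue by contradiction at a point $x_0\in K$: if a smooth $\varphi$ touches $\tilde u$ from above at $x_0$ with $F(\varphi,D\varphi,D^2\varphi)(x_0)<f(x_0)$, I would perturb using $\varphi(x)+\varepsilon\Phi_{\alpha^*}(x-x_0)$. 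Because ${\cal P}^+_{\lambda,\Lambda|p}(D^2\Phi_{\alpha^*})=0$ away from the singularity and $\Phi_{\alpha^*}\to+\infty$ at $x_0$, for any $\varepsilon>0$ the function $\varphi+\varepsilon\Phi_{\alpha^*}(\cdot-x_0)$ is still a strict supersolution near $x_0$ (here one uses the structure condition \eqref{SC}, absorbing the gradient term $b|D\Phi|$ and the zero-order term into the strict inequality on a sufficiently small punctured ball, exactly as in the EMP proof), and it dominates $u$ on a small sphere; then the EMP of Theorem \ref{th1} applied on the small punctured ball — whose relevant singular part of the boundary is the single point $x_0$, of zero $\alpha$-capacity — forces $u\le\varphi+\varepsilon\Phi_{\alpha^*}(\cdot-x_0)$ there. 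Letting $\varepsilon\to0$ gives $\tilde u\le\varphi$ on a punctured neighborhood, and since $\tilde u(x_0)=\varphi(x_0)$ this shows $\tilde u$ has no room for strict violation, hence is a subsolution. The hypothesis $b=0$ in the endpoint case $\alpha=\alpha^*$ enters precisely because that is where Theorem \ref{th1} is available at $\alpha=\alpha^*$.

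The hereditary part — that the capacity condition $\mathrm{Cap}_\alpha(E)=0$ transfers to the single points used above and that the EMP is really applicable — is essentially immediate, since a single point has vanishing $\alpha$-capacity for all $\alpha\ge0$ in the relevant range; the subtler point is that one must run EMP not just for a point but to control $u$ on the sphere from the off-singular boundary, which is why the perturbation must dominate $u$ on $\partial B_r(x_0)$ for small $r$, using boundedness of $u$ and the blow-up of $\Phi_{\alpha^*}$. I expect the main obstacle to be the bookkeeping in the strict-supersolution check: verifying that $F(\varphi+\varepsilon\Phi_{\alpha^*},\,D\varphi+\varepsilon D\Phi_{\alpha^*},\,D^2\varphi+\varepsilon D^2\Phi_{\alpha^*})\le f$ on a small punctured ball, which requires combining the strictness of $\varphi$, the continuity of the data, the fact that $\Phi_{\alpha^*}$ solves the homogeneous maximal equation, and a careful estimate showing the lower-order contributions $b\,\varepsilon|D\Phi_{\alpha^*}|$ and $-c\,\varepsilon\Phi_{\alpha^*}$ do not destroy the inequality near $x_0$ — this is the degenerate analogue of the computation already carried out in the proof of Theorem \ref{th1}, so it should go through with the same constraints on the parameters.
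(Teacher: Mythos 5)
Your proposal shares the overall philosophy of the paper (use the extended maximum/comparison principle and blow-up functions to cross the singular set, and identify an upper and a lower envelope), but it diverges at the key step, and there the argument as written has a genuine gap. You try to verify the subsolution property of $\tilde u$ at a point $x_0 \in E$ by contradiction, perturbing the test function $\varphi$ with $\varepsilon\,\Phi_{\alpha^*}(\cdot-x_0)$ and invoking the EMP on ``the small punctured ball $B_r(x_0)\setminus\{x_0\}$.'' Two problems. First, $u$ is only a subsolution on $B_r(x_0)\setminus E$, not on $B_r(x_0)\setminus\{x_0\}$: the exceptional set in any application of Theorem \ref{th1} must be $E\cap\overline B_r(x_0)$, and a perturbation centered at the single point $x_0$ blows up only at $x_0$, not on the rest of $E$ near $x_0$. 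Second, and more seriously, even after correcting the domain, the chain of inequalities you set up does not produce a contradiction: by the touching you already have $u\le\varphi$ on $B_r(x_0)\setminus E$ for small $r$, so the EMP conclusion $u\le\varphi+\varepsilon\Phi_{\alpha^*}(\cdot-x_0)$ and the limit $\varepsilon\to0$ only reproduce what you already knew; the strict inequality $F(\varphi,D\varphi,D^2\varphi)(x_0)<f(x_0)$ is never actually used to derive an absurdity, so the subsolution property at $x_0$ is not established. (A secondary issue: when $b>0$ you should be perturbing with $\Phi_\alpha$ for $\alpha<\alpha^*$, since $\Phi_{\alpha^*}$ does not control the gradient term; and the lower-order terms $b|D\Phi|$ do not ``get absorbed on a small punctured ball'' --- they blow up faster than $\Phi$ itself as $x\to x_0$.)

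The paper proceeds quite differently and avoids the contradiction argument altogether. With $U$ the usc extension of $u$ across $E$, it considers $w_\varepsilon = U - \varepsilon V^\mu_\alpha$, where $V^\mu_\alpha$ is the Riesz potential of a measure supported on $E$ (Proposition \ref{potential}), which blows up on \emph{all} of $E$, not merely at a single point. Then $w_\varepsilon\equiv-\infty$ on $E$, so it is trivially a subsolution there, and on $\Omega_0\setminus E$ it is a subsolution of $F=f-\varepsilon K$ by Lemma \ref{lemdifference} together with the estimate ${\cal P}^+_{\lambda,\Lambda|p}(D^2V^\mu_\alpha)+b|DV^\mu_\alpha|\le K$. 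Taking $\sup_{0<\varepsilon<\varepsilon_0}w_\varepsilon$, its usc regularization is $U$ and is a subsolution of $F=f-\varepsilon_0 K$ by stability of viscosity subsolutions under upper envelopes; letting $\varepsilon_0\to 0^+$ finishes. The supersolution half is symmetric, and $U=V$ is obtained from the extended comparison principle (Corollary \ref{corthm1}) on small balls $B_\delta(x_0)$ with $b\delta<\lambda p$. If you want to salvage a test-function argument, you would have to add $\varepsilon V^\mu_\alpha$ (not $\varepsilon\Phi_{\alpha^*}(\cdot-x_0)$) to the test function so as to force the maximum of $u-\varphi_\varepsilon$ into $\Omega\setminus E$, and then pass to the limit carefully; the paper's envelope argument is cleaner precisely because it dodges those limit-point issues.
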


\noindent The paper is organized as follows: in Sections \ref{Pucci} and \ref{Viscosity} we  report on definitions and some properties  concerning viscosity solutions and the operators ${\cal P}^\pm_{\lambda,\Lambda|p}$. Section \ref{A priori bound} is devoted to a priori estimates for subsolutions of (\ref{eq5intr}), and we also show a counterexample to the validity of Maximum Principle.  In Section \ref{Extended a priori bound} we establish the extended version of these results, while in Section \ref{Removable singularities} we deal with removable singularities.

\section{Degenerate Pucci operators}\label{Pucci}

Let $({\cal S}^n,\leq)$ be the partial ordered set of $n\times n$ real symmetric matrices in which $X\leq Y$  means that $Y-X$ is a positive semidefinite matrix. The eigenvalues of a matrix $X \in {\cal S}^n$ will be arranged in increasing order: $e_1(X) \le e_2(X) \le \ldots \le e_n(X)$. We will consider the norm 
\begin{equation} 
\left\|X\right\|=\sup\left\{|e_i(X)|:\,i=1,\dots,n\right\}
\end{equation}
Let $\Omega$ be an open connected set of $\mathbb R^n$. A continuous real valued mapping $F:\Omega \times \mathbb R\times\mathbb R^n\times{\cal S}^n\mapsto\mathbb R$ is said to be \emph{degenerate elliptic} if it is non-decreasing in the matrix variable: {for any $(x,s,\xi)\in\Omega\times\mathbb R\times\mathbb R^n$}
\begin{equation*}
{F(x,s,\xi,X)\leq F(x,s,\xi,Y)\quad\text{whenever $X\leq Y$.}}
\end{equation*}
In what follows, unless otherwise stated, $F$ will be assumed \emph{proper}, i.e. degenerate elliptic and non-increasing in the scalar variable for any $(x,\xi,X)\in\Omega\times\mathbb R^n\times {\cal S}^n$:
\begin{equation} 
F(x,s,\xi,X)\geq F(x,t,\xi,X)\quad\text{whenever $s\leq t$}.
\end{equation}
{Moreover, $F$ is \emph{uniformly elliptic} if there exist two constants $\Lambda\geq\lambda>0$, called ellipticity constants, for which
\begin{equation*}
\lambda Tr(P)\leq F(x,s,\xi,X+P)-F(x,s,\xi,X)\leq\Lambda Tr(P)\quad \forall\text{ $P\geq0$}
\end{equation*}
and all $(x,s,p,X)\in\Omega\times\mathbb R\times\mathbb R^n\times{\cal S}^n$.\\ }

\noindent Let $W$ be a linear subspace of $\mathbb R^n$, and let $P_{_W}$ be the orthogonal projection operator on $W$, represented with respect to the standard basis  
$$
E^1= \left(\begin{array}{l}
1\\
\vdots\\
0
\end{array}\right)\,,\, \dots\,,\, E^n= \left(\begin{array}{l}
0\\
\vdots\\
1
\end{array}\right)\,
$$
We define the projection of $A \in {\cal S}^n$ on $W$ setting
\begin{equation}
A_{_W}:= P_{_W}AP_{_W}\,,
\end{equation}
which represents the restriction of the quadratic form $A$ on $W$. Note that $P_{_W}=BB^T$, where the columns of $B$ are an orthonormal basis of $W$, and, if $A=I$, the $n\times n$ identity matrix, then $I_{_W}=P_{_W}$. We will use in the sequel the fact that the mapping $A \in {\cal S}^n \to A_W \in {\cal S}^n$ is linear, and also that $I_{_W}^2=I_{_W}$.\\
We define the {\it degenerate elliptic Pucci maximal and minimal operators restricted to $W$}, with ellipticity constants $\lambda \in (0,\infty)$ and $\Lambda\in [\lambda,\infty)$, respectively as 
\begin{equation}\label{Pucci-W-eq}
\begin{split}
{\cal P}^+_{\lambda,\Lambda|W}(X) = \Lambda\,Tr(X^+_{_W})-\lambda\,Tr(X^-_{_W})\,,\\
{\cal P}^-_{\lambda,\Lambda|W}(X) = \lambda\,Tr(X^-_{_W})-\Lambda\,Tr(X^+_{_W})\,,
\end{split}
\end{equation}
where $X^\pm$ are the unique non-negative matrices such that $X=X^+-X^-$ and $X^+X^-=0$, while $Tr(X)$ denotes the trace of the matrix $X$ and $X^\pm_{_W}$ stands for $\left(X_{_W}\right)^\pm$. Note that, if $W=\mathbb R^n$, the above definition returns the standard Pucci extremal operators \cite{P}:
\begin{equation*}
\begin{split}
{\cal M}^+_{\lambda,\Lambda}(X) = \Lambda\,Tr(X^+)-\lambda\,Tr(X^-)\,,\\
{\cal M}^-_{\lambda,\Lambda}(X) = \lambda\,Tr(X^-)-\Lambda\,Tr(X^+)\,.
\end{split}
\end{equation*}

\noindent Next, we define the linear functional
\begin{equation}
L_{A|W}X= Tr(A_{_W}X_{_W})\, \quad X \in {\cal S}^n
\end{equation}
observing that equivalently $L_{A|W}X=Tr(A_{_W}X)=Tr(AX_{_W})$.\\
Moreover, assuming $\lambda I_{_W}\le A_{_W} \le \Lambda I_{_W}$, in short $A_W \in [\lambda,\Lambda]$, for positive constants $\lambda$ and $\Lambda$, we have
\begin{equation}
\begin{split}
L_{A|W}X=& \,Tr(A_{_W}X^+_{_W}) - Tr(A_{_W}X^-_{_W}) \\
\le&\,\Lambda Tr(X^+_{_W}) - \lambda Tr(X^-_{_W}) \equiv {\cal P}^+_{\lambda,\Lambda|W}(X)\,
\end{split}
\end{equation}
and similarly
\begin{equation}
L_{_{A|W}}X \ge \lambda Tr(X^+_{_W}) - \Lambda Tr(X^-_{_W}) \equiv {\cal P}^-_{\lambda,\Lambda|W}(X)\,.
\end{equation}
As in the Introduction, the fully nonlinear operators ${\cal P}^+_{\lambda,\Lambda|W}$ and ${\cal P}^-_{\lambda,\Lambda|W}$ will be called the Pucci maximal and minimal operators on $W$, respectively, and the above shows  that
\begin{equation}\label{Pucci-W-ineq}
{\cal P}^-_{\lambda,\Lambda|W}(X) \le \inf_{A_W \in [\lambda,\Lambda]}L_{_{A|W}}X\le\sup_{A_W \in [\lambda,\Lambda]}L_{_{A|W}}X\le \,{\cal P}^+_{\lambda,\Lambda|W}(X). 
\end{equation}
On the other hand, for a fixed $X$ we construct $\tilde A \in {\cal S}^n$ such that $\tilde A=\Lambda I$, resp. $\tilde A=\lambda I$, on the linear subspace $V_+$, resp. $V_-$, spanned by the eigenvectors of $X_W$ corresponding to nonnegative, resp. negative, eigenvalues, so that 
\begin{equation}
{\cal P}^+_{\lambda,\Lambda|W}(X) = L_{_{\tilde A|W}}X \le \sup_{A_W \in [\lambda,\Lambda]}L_{_{A|W}}X
\end{equation}
and, reversing the role of $\lambda$ and $\Lambda$,
\begin{equation}
{\cal P}^-_{\lambda,\Lambda|W}(X) \ge \inf_{A_W \in [\lambda,\Lambda]}L_{_{A|W}}X.
\end{equation}
Hence, inequalities (\ref{Pucci-W-ineq}) can be restated more precisely as
\begin{equation}\label{Pucci-W-ineq2}
{\cal P}^-_{\lambda,\Lambda|W}(X) = \inf_{A_W \in [\lambda,\Lambda]}L_{_{A|W}}X\le\sup_{A_W \in [\lambda,\Lambda]}L_{_{A|W}}X= \,{\cal P}^+_{\lambda,\Lambda|W}(X)\,.
\end{equation}

\noindent From the above characterization it is not difficult to prove that the Pucci maximal and minimal operators on $W$ fulfill many properties of the standard Pucci operators, see for instance in Lemma 2.10 of \cite{CC}, which we list here below for convenience of the reader. 

\begin{lemma} \label{Pucci-W} The Pucci maximal and minimal operators restricted to $W$, respectively ${\cal P}^+_{\lambda,\Lambda|W}$ and ${\cal P}^-_{\lambda,\Lambda|W}$, as defined in $(\ref{Pucci-W-eq})$, are degenerate elliptic operators, uniformly elliptic if $p=n$, with the following properties:
\vskip0.25cm
\begin{tabular}{l l  l }
\hskip-0.75cm a)  &\hskip-0.5cm  ${\cal P}^-_{\lambda,\Lambda|W}(X) = -{\cal P}^+_{\lambda,\Lambda|W}(-X)$  \hfill {\rm(duality)}\\[1ex]
\hskip-0.75cm b)  &\hskip-0.5cm  ${\cal P}^\pm_{\lambda,\Lambda|W}(cX) = c{\cal P}^\pm_{\lambda,\Lambda|W}(X)$ \hbox{\rm if  } $c \ge 0$\quad   \hfill \hbox{\rm (positive homogeneity)} \\[1ex]
\hskip-0.75cm c) &\hskip-0.5cm ${\cal P}^+_{\lambda,\Lambda|W}(X) + {\cal P}^-_{\lambda,\Lambda|W}(Y) \le {\cal P}^+_{\lambda,\Lambda|W}(X+Y) \le {\cal P}^+_{\lambda,\Lambda|W}(X) + {\cal P}^+_{\lambda,\Lambda|W}(Y)$ \\[1ex]
& \hfill \hbox{\rm (subadditivity and reverse inequality)}\\ [1ex]
\hskip-0.75cm d)  &\hskip-0.5cm ${\cal P}^-_{\lambda,\Lambda|W}(X) + {\cal P}^-_{\lambda,\Lambda|W}(Y) \le {\cal P}^-_{\lambda,\Lambda|W}(X+Y) \le {\cal P}^+_{\lambda,\Lambda|W}(X) + {\cal P}^-_{\lambda,\Lambda|W}(Y)$\\ [1ex]
& \hfill \hbox{\rm (superadditivity and reverse inequality)}\\ [1ex]
\hskip-0.75cm e)  &\hskip-0.5cm ${\cal P}^-_{\lambda',\Lambda'|W}(X) \le {\cal P}^-_{\lambda,\Lambda|W}(X) \le {\cal P}^+_{\lambda,\Lambda|W}(X) \le {\cal P}^+_{\lambda',\Lambda'|W}(X)$ \ if \ $[\lambda,\Lambda]\subset [\lambda',\Lambda']$  \\ [1ex]
& \hfill {\rm (monotonicity with respect to the ellipticity interval)}\\
\end{tabular}\\
\end{lemma}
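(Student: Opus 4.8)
The plan is to derive every assertion from the variational characterisation already proved in (\ref{Pucci-W-ineq2}):
\[
{\cal P}^+_{\lambda,\Lambda|W}(X)=\sup_{A_W\in[\lambda,\Lambda]}L_{A|W}X,\qquad {\cal P}^-_{\lambda,\Lambda|W}(X)=\inf_{A_W\in[\lambda,\Lambda]}L_{A|W}X,
\]
recalling that for each admissible $A$ the functional $X\mapsto L_{A|W}X=Tr(A_WX_W)$ is linear in $X$, since $X\mapsto X_W$ is linear. Thus ${\cal P}^+_{\lambda,\Lambda|W}$ and ${\cal P}^-_{\lambda,\Lambda|W}$ are respectively an upper and a lower envelope of a family of linear functionals, and all the properties in the statement are exactly those shared by such envelopes; the argument is then formally identical to the one for the classical Pucci operators (e.g. Lemma 2.10 of \cite{CC}), with $Tr(AX)$ replaced by $Tr(A_WX_W)$, the replacement being legitimate precisely because only linearity of $X\mapsto X_W$ is used.

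First I would record two elementary facts about projections. If $X\le Y$ in ${\cal S}^n$ then, for every $v\in\mathbb R^n$, $v^TP_W(Y-X)P_Wv=(P_Wv)^T(Y-X)(P_Wv)\ge0$, so $X_W\le Y_W$; moreover, since $A_W\ge\lambda I_W\ge0$, the quantity $Tr\big(A_W(Y_W-X_W)\big)$ is the trace of a product of two positive semidefinite matrices, hence nonnegative. Therefore $L_{A|W}X\le L_{A|W}Y$ for every admissible $A$, and passing to the supremum, resp. infimum, gives that ${\cal P}^+_{\lambda,\Lambda|W}$ and ${\cal P}^-_{\lambda,\Lambda|W}$ are degenerate elliptic. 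When $p=n$ one has $W=\mathbb R^n$, $X_W=X$, so these are the classical Pucci operators ${\cal M}^\pm_{\lambda,\Lambda}$; uniform ellipticity then follows from the subadditivity established in c) below together with the values ${\cal P}^+_{\lambda,\Lambda}(P)=\Lambda\,Tr(P)$, ${\cal P}^-_{\lambda,\Lambda}(P)=\lambda\,Tr(P)$ for $P\ge0$ (immediate from $\lambda\,Tr(P)\le Tr(AP)\le\Lambda\,Tr(P)$ whenever $\lambda I\le A\le\Lambda I$), which yield $\lambda\,Tr(P)\le{\cal P}^+_{\lambda,\Lambda}(X+P)-{\cal P}^+_{\lambda,\Lambda}(X)\le\Lambda\,Tr(P)$ and likewise for ${\cal P}^-$.

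The five listed properties would then be obtained by routine envelope manipulations. For a): replacing $X$ by $-X$ and using $L_{A|W}(-X)=-L_{A|W}X$ gives $-{\cal P}^+_{\lambda,\Lambda|W}(-X)=-\sup_A(-L_{A|W}X)=\inf_AL_{A|W}X={\cal P}^-_{\lambda,\Lambda|W}(X)$. For b): for $c\ge0$, $L_{A|W}(cX)=cL_{A|W}X$ and $\sup$, $\inf$ commute with multiplication by a nonnegative scalar. For c): $\sup_A(L_{A|W}X+L_{A|W}Y)\le\sup_AL_{A|W}X+\sup_AL_{A|W}Y$ is the subadditivity of ${\cal P}^+$, while $\sup_A(L_{A|W}X+L_{A|W}Y)\ge\sup_A\big(L_{A|W}X+\inf_BL_{B|W}Y\big)={\cal P}^+_{\lambda,\Lambda|W}(X)+{\cal P}^-_{\lambda,\Lambda|W}(Y)$ is the reverse inequality; d) follows either by the same argument with infima in place of suprema, or from c) and a). For e): if $[\lambda,\Lambda]\subseteq[\lambda',\Lambda']$ then $\{A:A_W\in[\lambda,\Lambda]\}\subseteq\{A:A_W\in[\lambda',\Lambda']\}$, so the supremum over the smaller family does not exceed the one over the larger, symmetrically for infima, and ${\cal P}^-_{\lambda,\Lambda|W}(X)\le{\cal P}^+_{\lambda,\Lambda|W}(X)$ is part of (\ref{Pucci-W-ineq2}).

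I do not expect any real obstacle. The only point requiring a little care is that the interchange of $\sup$ and $\inf$ is one-directional, which is exactly why c) and d) yield a reverse \emph{inequality} rather than an equality, and why their two-sided bounds involve ${\cal P}^+$ and ${\cal P}^-$ asymmetrically. It is also worth noting that the clause ``uniformly elliptic if $p=n$'' is sharp: for a proper subspace $W$, a positive semidefinite perturbation $P$ supported on $W^\perp$ satisfies $P_W=0$, hence ${\cal P}^\pm_{\lambda,\Lambda|W}(X+P)={\cal P}^\pm_{\lambda,\Lambda|W}(X)$ while $Tr(P)>0$, so the lower ellipticity bound cannot hold and the operators are genuinely degenerate for $p<n$.
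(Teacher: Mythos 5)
Your proposal is correct and follows precisely the route the paper indicates: the paper gives no detailed proof of this lemma, stating only that the properties follow ``from the above characterization'' (the sup/inf representation in (\ref{Pucci-W-ineq2})) just as in Lemma 2.10 of \cite{CC}, and your envelope argument — linearity of $X\mapsto L_{A|W}X$, monotonicity under $X\le Y$ via $X_W\le Y_W$ and $A_W\ge 0$, and the standard $\sup/\inf$ manipulations for a)--e) — is a faithful and complete expansion of exactly that hint.
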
 

\noindent Letting $W$ run over the Grassmannian ${\cal G}_p$ of all linear $p$-dimensional subspaces of $\mathbb R^n$, and taking the supremum and the infimum over ${\cal G}_p$, we will obtain the Pucci maximal and minimal operators of order $p$, respectively ${\cal P}^+_{\lambda,\Lambda|p}(X)$ and ${\cal P}^-_{\lambda,\Lambda|p}(X)$, as defined in (\ref{Pucci-p-eq}); see the Introduction. \\
To see this, for $W \in {\cal G}_p$ and $X \in {\cal S}^n$ we note that   
\begin{equation}\label{eq25}
\begin{split}
{\cal P}^+_{\lambda,\Lambda|W}(X) =&\sup_{A_W\in\left[\lambda,\Lambda\right]}Tr\left(A_{_W}X\right)\\
\le&\, \Lambda Tr(I_{_W}X^+) - \lambda Tr(I_{_W}X^-)\\
= &\, \Lambda Tr(I_{_W} OD^+O^T)- \lambda Tr(I_{_W} OD^-O^T)\\
= &\, \Lambda Tr(O^TI_{_W}OD^+)- \lambda Tr(O^TI_{_W} OD^-)\\
= &\, \Lambda Tr(I_{_{\widetilde W}}D^+)- \lambda Tr(I_{_{\widetilde W}} D^-).
\end{split}
\end{equation}
In the above, we have used the existence of an orthogonal matrix $O$, i.e. $OO^T=I=O^TO$, such that $O^TXO=D$ is diagonal, choosing $O$ in order that the eigenvalues $e_i(X)$ occur in nondecreasing order from the top to the bottom on the diagonal of $D$. \\
Moreover, let $B^{n-p+1},\dots,B^n$ be an orthonormal basis for $W$ and $B=(B^{n-p+1},\dots,B^n)$, then 
$$I_{_{\widetilde W}}= O^TI_{_W}O=O^TBB^TO=O^TB(O^TB)^T$$ 
is in turn a projection operator on a $p$-dimensional linear subspace $\widetilde W$, which is generated by the unit vectors $\tilde B^{n-p+1}= O^TB^{n-p+1},\ldots,\tilde B^{n}=O^TB^n$.  \\
From (\ref{eq25}), by linearity we have
\begin{equation}\label{D+D-}
{\cal P}^+_{\lambda,\Lambda|W}(X) \leq \, Tr(I_{_{\widetilde W}}(\Lambda D^+-\lambda D^-))\,,
\end{equation}
where
\begin{equation*}
 D^\pm = \left(\begin{array}{cccc}
 e_1^\pm(X) & 0 & \ldots & 0\\
0 &  e_{2}^\pm(X)& \ldots & 0\\
\vdots & \vdots &\ddots &\vdots\\
0 &\ldots &\ldots & e_n^\pm(X)
\end{array}\right)
\end{equation*}
and $e_i^\pm(X)=\max(\pm e_i(X),0)$.\\
Next, we estimate the right-hand side of (\ref{D+D-}), setting $\lambda_i=\Lambda e^+_i(X)-\lambda e^-_i(X)$ and $\tilde B_i^j$ the components of $\tilde B^j$, so that
\begin{equation*}
Tr(I_{_{\widetilde W}}(\Lambda D^+-\lambda D^-))=\sum_{i=1}^n\lambda_i\sum_{j=n-p+1}^n|\tilde B_i^j|^2\,.
\end{equation*}
Since the sequence of $\lambda_i$ is nondecreasing, from this we get
\begin{equation*}
\begin{split}
Tr(I_{_{\widetilde W}}(\Lambda D^+-\lambda D^-))&=\sum_{i=1}^{n-p}\lambda_i\sum_{j=n-p+1}^n|\tilde B^j_i|^2+\sum_{i=n-p+1}^{n}\lambda_i\left(\sum_{j=n-p+1}^n|\tilde B_i^j|^2-1\right)\\
&+\sum_{i=n-p+1}^{n}\lambda_i\\
&\le\lambda_{n-p}\sum_{j=n-p+1}^{n}\left(\sum_{i=1}^n |\tilde B^j_i|^2-1\right)+\sum_{i=n-p+1}^{n}\lambda_i\\
&=\sum_{i=n-p+1}^n\left(\Lambda e^+_i(X)-\lambda e^-_i(X)\right).
\end{split}
\end{equation*}
Combining this inequality with the above (\ref{D+D-}), we get
\begin{equation}\label{bound}
{\cal P}^+_{\lambda,\Lambda|W}(X) \leq \,\sum_{i=n-p+1}^n\left(\Lambda e^+_i(X)-\lambda e^-_i(X)\right).
\end{equation}
On the other side, if $W$ is the linear subspace $W_0$, mapped by the orthogonal transformation $O^T$, such that $O^TXO=D$, into the linear subspace $\widetilde W_0=\{0\}^{n-p}\times\mathbb R^p$ spanned by $E^{n-p+1},\dots, E^n$, then inequality in (\ref{D+D-}) is achieved. Indeed, since
\begin{equation*}
D_{_{\widetilde W_0}}=\left(\begin{array}{cccccc}
0      & \ 0      & \  \ldots & \ldots&\ldots & 0\\
0      & \ \ddots & \  \ldots & \ldots&\ldots & \vdots\\
\vdots & \ \ldots & \ 0 &\ldots &\ldots & \vdots\\
\vdots & \ \ldots & \ \ldots & e_{n-p+1}(X) &\ldots &\vdots\\
\      &   \      &    \      &   \         & \     & \   \\
\vdots & \ \ldots & \  \ldots &\ldots & \ddots & \vdots\\
0      & \ \ldots & \ \ldots &\ldots &  \ldots & e_n(X)
\end{array}\right),
\end{equation*}

\noindent and

\begin{equation*}
\begin{split}
{\cal P}_{\lambda,\Lambda|W_0}^+(X)&=\sup_{A_{W_0}\in[\lambda,\Lambda]}Tr(AX_{_{W_0}})=\sup_{A_{\widetilde W_0}\in[\lambda,\Lambda]}Tr(OAO^TX_{_{W_0}})\\
&=\sup_{A_{\widetilde W_0}\in[\lambda,\Lambda]}Tr\left((OAO^T)_{_{W_0}}X\right)\\
&=\sup_{A_{\widetilde W_0}\in[\lambda,\Lambda]}Tr\left(O^T(OAO^T)_{_{W_0}}OD\right)\\
&=\sup_{A_{\widetilde W_0}\in[\lambda,\Lambda]}Tr\left(A_{_{\widetilde W_0}}D\right)={\cal P}_{\lambda,\Lambda|\widetilde W_0}^+(D)
\end{split}
\end{equation*} 
we conclude that
\begin{equation}\label{maximality}
{\cal P}_{\lambda,\Lambda|W_0}^+(D)=\sum_{i=n-p+1}^n\left(\Lambda e^+_i(X)-\lambda e^-_i(X)\right).
\end{equation}
As a consequence we obtain the following representation for the Pucci maximal operator of order $p$:
%V6rev
\begin{equation}
\begin{split}
\sup_{W \in {\cal G}_p}{\cal P}^+_{\lambda,\Lambda|W}(X)= \sum_{i=n-p+1}^n\left(\Lambda e^+_i(X)-\lambda e^-_i(X)\right) \equiv \, {\cal P}^+_{\lambda,\Lambda|p}(X).
\end{split}
\end{equation}
\noindent A similar computation can be carried out for the minimal Pucci operator, showing that
\begin{equation}
\begin{split}
\inf_{W \in {\cal G}_p}{\cal P}^-_{\lambda,\Lambda|W}(X) = \sum_{i=1}^p \left(\Lambda e^+_i(X)-\lambda e^-_i(X)\right).
\end{split}
\end{equation}
Therefore
\begin{equation}\label{Pucci-p-ineq}
{\cal P}^-_{\lambda,\Lambda|p}(X) = \inf_{W \in {\cal G}}{\cal P}^-_{\lambda,\Lambda|W}(X)\le\sup_{W \in {\cal G}}{\cal P}^+_{\lambda,\Lambda|W}(X)= \,{\cal P}^+_{\lambda,\Lambda|W}(X)\,.
\end{equation}
From (\ref{Pucci-W-ineq2}) and the above characterization we also obtain the following representation for the Pucci maximal and minimal operators of order $p$:
\begin{equation}\label{sup/inf:representation}
\begin{split}
{\cal P}^+_{\lambda,\Lambda|p}(X) = \sup_{W \in {\cal G}_p}\,\sup_{A_W \in [\lambda,\Lambda]}L_{_{A|W}}X,\\
{\cal P}^-_{\lambda,\Lambda|p}(X) = \inf_{W \in {\cal G}_p}\,\inf_{A_W \in [\lambda,\Lambda]}L_{_{A|W}}X.
\end{split}
\end{equation}
It is also worth to remark that from Lemma \ref{Pucci-W} and the above characterization, we can state for degenerate elliptic Pucci operators of order $p$ the analogous of Lemma \ref{Pucci-W}.

\begin{lemma} \label{Pucci-p} The Pucci maximal and minimal operators of order $p$, respectively ${\cal P}^+_{\lambda,\Lambda|p}$ and ${\cal P}^-_{\lambda,\Lambda|p}$, as defined in $(\ref{Pucci-p-eq})$, are degenerate elliptic operators, uniformly elliptic if $p=n$, satisfying the properties {\rm a) $\div$ e)} of {\rm Lemma \ref{Pucci-W}} with $W$ replaced by $p$.
\end{lemma}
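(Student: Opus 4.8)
The plan is to obtain Lemma \ref{Pucci-p} directly from Lemma \ref{Pucci-W} together with the two identities ${\cal P}^+_{\lambda,\Lambda|p}(X)=\sup_{W\in{\cal G}_p}{\cal P}^+_{\lambda,\Lambda|W}(X)$ and ${\cal P}^-_{\lambda,\Lambda|p}(X)=\inf_{W\in{\cal G}_p}{\cal P}^-_{\lambda,\Lambda|W}(X)$ established above. The only analytic ingredients are the elementary facts that a pointwise supremum (resp.\ infimum) of monotone functions is monotone, that $\sup(f+g)\le\sup f+\sup g$ while $\inf(f+g)\ge\inf f+\inf g$, and that additive constants and nonnegative multiplicative constants pass through $\sup$ and $\inf$. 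The sharper representation (\ref{sup/inf:representation}) in terms of the linear functionals $L_{A|W}$ is not needed, though it gives an alternative route to the same conclusions.

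Degenerate ellipticity is immediate, since each ${\cal P}^\pm_{\lambda,\Lambda|W}$ is monotone in $X$ by Lemma \ref{Pucci-W}. For $p=n$ one has ${\cal G}_n=\{\mathbb R^n\}$, so ${\cal P}^\pm_{\lambda,\Lambda|n}={\cal M}^\pm_{\lambda,\Lambda}$ are the classical Pucci operators, which are uniformly elliptic; this gives the stated uniform ellipticity. Property a) follows by combining duality for fixed $W$ with $\inf_W\bigl(-{\cal P}^+_{\lambda,\Lambda|W}(-X)\bigr)=-\sup_W{\cal P}^+_{\lambda,\Lambda|W}(-X)$; property b) by pulling the nonnegative factor $c$ out of $\sup_W$, resp.\ $\inf_W$, in Lemma \ref{Pucci-W} b); and property e) term by term, since ${\cal P}^+_{\lambda,\Lambda|W}(X)\le{\cal P}^+_{\lambda',\Lambda'|W}(X)$ and ${\cal P}^-_{\lambda',\Lambda'|W}(X)\le{\cal P}^-_{\lambda,\Lambda|W}(X)$ for each $W$ when $[\lambda,\Lambda]\subset[\lambda',\Lambda']$, the middle inequality ${\cal P}^-_{\lambda,\Lambda|p}\le{\cal P}^+_{\lambda,\Lambda|p}$ being (\ref{Pucci-p-ineq}).

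It remains to handle the additivity properties c) and d). The two reverse (upper) inequalities in c), the upper inequality ${\cal P}^-_{\lambda,\Lambda|p}(X+Y)\le{\cal P}^+_{\lambda,\Lambda|p}(X)+{\cal P}^-_{\lambda,\Lambda|p}(Y)$ of d), and the lower inequality ${\cal P}^-_{\lambda,\Lambda|p}(X)+{\cal P}^-_{\lambda,\Lambda|p}(Y)\le{\cal P}^-_{\lambda,\Lambda|p}(X+Y)$ of d) all follow by writing the corresponding fixed-$W$ inequality from Lemma \ref{Pucci-W} c)--d), bounding the order-$p$ quantity on the favorable side, and then taking $\sup_W$ or $\inf_W$, with the remaining additive constant passing through. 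The one point needing a line of care is the lower inequality ${\cal P}^+_{\lambda,\Lambda|p}(X)+{\cal P}^-_{\lambda,\Lambda|p}(Y)\le{\cal P}^+_{\lambda,\Lambda|p}(X+Y)$ in c): from Lemma \ref{Pucci-W} c), for every $W\in{\cal G}_p$,
\[
{\cal P}^+_{\lambda,\Lambda|W}(X)\le{\cal P}^+_{\lambda,\Lambda|W}(X+Y)-{\cal P}^-_{\lambda,\Lambda|W}(Y)\le{\cal P}^+_{\lambda,\Lambda|p}(X+Y)-{\cal P}^-_{\lambda,\Lambda|p}(Y),
\]
where the last step uses ${\cal P}^-_{\lambda,\Lambda|W}(Y)\ge\inf_{W'}{\cal P}^-_{\lambda,\Lambda|W'}(Y)={\cal P}^-_{\lambda,\Lambda|p}(Y)$; taking $\sup_W$ on the left yields the claim, and the symmetric argument (using ${\cal P}^+_{\lambda,\Lambda|W}(X)\le{\cal P}^+_{\lambda,\Lambda|p}(X)$) disposes of the remaining inequality of d). Since every step reduces to the already proven fixed-$W$ case, no genuine obstacle arises; the proof is entirely a matter of organizing these $\sup$/$\inf$ manipulations.
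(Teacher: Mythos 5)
Your proof is correct and takes the same route the paper intends: the paper simply asserts that Lemma~\ref{Pucci-p} follows from Lemma~\ref{Pucci-W} together with the $\sup/\inf$ characterization of ${\cal P}^\pm_{\lambda,\Lambda|p}$ over $W\in{\cal G}_p$, and you supply the elementary extremization bookkeeping that makes this precise. You also correctly isolate the only delicate points, the mixed-sign lower inequality in c) (and its dual in d)), where a $\sup$ and an $\inf$ sit on the same side and the inequality must be rearranged before extremizing over $W$.
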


\noindent We end this section noticing that the operators ${\cal P}^\pm_{\lambda,\Lambda|p}$, which are degenerate elliptic by Lemma \ref{Pucci-p}, are not uniformly elliptic for $p<n$, as the following simple example shows in the case of $\lambda =\Lambda=1$, $p=1$, $n=2$. In fact, if 
\begin{equation*}
X= \left(\begin{array}{cc}
1      & 0  \\
0      & 0 
\end{array}\right), \ P= \left(\begin{array}{cc}
0      & 0  \\
0      & 1
\end{array}\right) \ge \left(\begin{array}{cc}
0      & 0  \\
0      & 0
\end{array}\right),
\end{equation*}
then
$$
e_2(X+P)-e_2(X) = 0 < \varepsilon = \varepsilon Tr(P) \quad \forall\,\varepsilon>0,
$$
thereby contradicting uniform ellipticity.

\noindent Finally, we remark that, in the same way all uniformly elliptic operators with ellipticity constants $\lambda>0$ and $\Lambda\ge \lambda$ are included between the minimal and maximal Pucci operators ${\cal M}^-_{\lambda,\Lambda}={\cal P}^-_{\lambda,\Lambda|n}$ and ${\cal M}^+_{\lambda,\Lambda}={\cal P}^+_{\lambda,\Lambda|n}$, we can include them in a larger range between the minimal and maximal degenerate elliptic Pucci operators ${\cal P}^-_{\frac np\,\lambda,\frac np\,\Lambda|p}$ and ${\cal P}^+_{\frac np\lambda,\frac np\Lambda|p}$, with $p<n$, here introduced. \\
In fact, the following inclusions hold true for all $X \in {\cal S}^n$:
\begin{equation}\label{inclusions}
{\cal P}^-_{\frac np\lambda,\frac np\Lambda|p}(X) \le {\cal M}^-_{\lambda,\Lambda}(X)={\cal P}^-_{\lambda,\Lambda|n}(X) \le {\cal M}^+_{\lambda,\Lambda}(X) \le {\cal P}^+_{\frac np\lambda,\frac np\Lambda|p}(X). 
\end{equation} 
To show this, note that, since the eigenvalues $e_i(X)$ are arranged in nondecreasing order, then the sequence of numbers $\Lambda e^+_i(X)-\lambda e^-_i(X)$ is in turn nondecreasing, so that 
\begin{align*}
  & \sum_{i=1}^n\left(\Lambda e^+_i(X)-\lambda e^-_i(X) \right) \\
= & \sum_{i=1}^{n-p}\left(\Lambda e^+_i(X)-\lambda e^-_i(X) \right) + \sum_{i=n-p+1}^{n}\left(\Lambda e^+_i(X)-\lambda e^-_i(X) \right)\\
\le & \left(\frac{n-p}{p} + 1\right) \sum_{i=n-p+1}^{n}\left(\Lambda e^+_i(X)- \lambda e^-_i(X) \right)\\
= &  \sum_{i=n-p+1}^{n}\left( \frac np\,\Lambda e^+_i(X)- \frac np\,\lambda e^-_i(X) \right)
\end{align*}
which proves the last inequality in (\ref{inclusions}), whereas the first one can be obtained by duality.

\section{Viscosity solutions of degenerate \\ Pucci equations}\label{Viscosity}

\noindent Denote by $LSC(\Omega)$, resp. $USC(\Omega)$, the space of lower, resp. upper, semicontinuous function $u:\Omega\mapsto(-\infty,+\infty]$, resp. $u:\Omega\mapsto[-\infty,+\infty)$, on the domain $\Omega\subseteq\mathbb R^n$. Let $f(x)$ be a continuous function in $\Omega$. We say that $u\in LSC(\Omega)$ is a \emph{viscosity supersolution} of the equation
\begin{equation}\label{eqpr1}
F(x,u(x),Du(x),D^2u(x))=f(x)\quad{\text{in $\Omega$}}
\end{equation}
if for any $x_0\in\Omega$ such that $u(x_0)<+\infty$ the inequality
\begin{equation}\label{eqpr2}
F(x_0,u(x_0),D\phi(x_0),D^2\phi(x_0))\leq f(x_0)
\end{equation}
holds true for any test function $\phi\in C^2(\Omega)$ such that $u-\phi$ attains a local minimum at $x_0$. In a symmetric way, choosing $x_0$ in such a way $u(x_0)>-\infty$ and using test functions $\phi$ such that $u-\phi$ has a local maximum, we obtain the definition of \emph{viscosity subsolution} for $u\in USC(\Omega)$ by reversing the inequality (\ref{eqpr2}). As usual, eventually replacing $\phi$ by $\phi\pm|x-x_0|^4$, we may assume that the local maximum and minimum are strict.\\ 
Henceforth we also say that $F(x,u(x),Du(x),D^2u(x))\leq f(x)$, respectively 
$F(x,u(x),Du(x),D^2u(x))\geq f(x)$,  is fulfilled in the viscosity sense whenever $u$ is a viscosity supersolution, resp. subsolution, of equation (\ref{eqpr1}). Finally we refer to a \emph{viscosity solution} $u$ of (\ref{eqpr1}) as a continuous function in $\Omega$ satisfying both the previous inequalities.\\
An equivalent way of defining viscosity solutions involves the notion of second order \emph{semijets}: the \emph{subjet} $J^{2,-}u(x)$, respectively \emph{superjet} $J^{2,+}u(x)$, of $u$ at $x\in\Omega$ is the convex set of all pairs $(\xi,X)\in\mathbb R^n\times{\cal S}^n$ such that 
$$\ u(y)\geq u(x)+\left\langle \xi,y-x\right\rangle+\frac{1}{2}\left\langle X(y-x),y-x\right\rangle+o(|y-x|^2)\quad\text{as } y\to x,$$
respectively
$$\ u(y)\leq u(x)+\left\langle \xi,y-x\right\rangle+\frac{1}{2}\left\langle X(y-x),y-x\right\rangle+o(|y-x|^2)\quad\text{as } y\to x.$$
Note that $J^{2,-}u(x)=-J^{2,+}(-u)(x)$ and if $u$ is twice differentiable at $x\in\Omega$ then 
\begin{align*} J^{2,+}u(x)=&\left\{(Du(x),X):\;X\geq D^2u(x)\right\},\\ \ J^{2,-}u(x)=& \left\{(Du(x),X):\;X\leq D^2u(x)\right\}.\end{align*}
A function $u\in LSC(\Omega)$ ($USC(\Omega)$) is a viscosity supersolution, resp. a subsolution, of (\ref{eqpr1}) if
\begin{equation}\label{eqpr3}
\begin{split}
&F(x,u(x),\xi,X)\leq f(x)\quad\forall (\xi,X)\in J^{2,-}u(x)\,, \ \hbox{\rm resp.}\\
&F(x,u(x),\xi,X)\geq f(x)\quad\forall (\xi,X)\in J^{2,+}u(x)
\end{split}
\end{equation}
The closures of the semijets are defined as 
\begin{equation*}
\begin{split}
&\overline J^{2,\pm}u(x)=\left\{(\xi,X)\in\mathbb R^n\times{\cal S}^n:\,\exists (x_\alpha,\xi_\alpha,X_\alpha)\in\Omega\times\mathbb R^n\times{\cal S}^n, \right.\\
&\left. (\xi_\alpha,X_\alpha)\in J^{2,\pm}u(x_\alpha)\text{ and } (x_\alpha,u(x_\alpha),\xi_\alpha,X_\alpha)\to(x,u(x),\xi,X)\;\text{as}\;\alpha\to\infty\right\}.
\end{split}
\end{equation*}
Inequalities (\ref{eqpr3}) remains true  if $(\xi,X)\in \overline J^{2,\pm}u(x)$ by continuity.

\noindent {Viscosity solutions are stable with respect to upper and lower semicontinuous envelope: let ${\cal F}$ be a family of subsolutions of (\ref{eqpr1}), $w(x)=\sup\left\{u(x):\,u\in{\cal F}\right\}$ and
\begin{equation*}
\begin{split}
w^*(x)&=\lim_{r\to0^+}\sup\left\{w(y):\,\text{$y\in\Omega$ and $|y-x|\leq r$}\right\}\\
&=\inf\left\{v(x):\,\text{$v\in USC(\Omega)$ and $v\geq w$ in $\Omega$}\right\}
\end{split}
\end{equation*}
the upper semicontinuous regularization of $w$; if $w^*<+\infty$ then $w^*$ is in turn a subsolution of (\ref{eqpr1}). An analogous result involving the lower semicontinuous regularization bounded below  holds for supersolutions. See \cite[Lemma 4.2]{USER} for further details.\\
%V6rev
In Proposition \ref{potential} we will also use another stability property, that the limit of a non-decreasing sequence of continuous viscosity supersolution of (\ref{eqpr1}) is a supersolution of the same equation. For a proof we refer to \cite[Lemma 2.1]{AGV}, but it is a well known result in the literature on viscosity solutions.%V6rev

\noindent In order to compare viscosity subsolutions and supersolutions we will use the following lemma, which provides a maximal equation for their difference.
 
 \begin{lemma}\label{lemdifference}
Let $u$ and $v$ be respectively  viscosity  subsolutions and supersolutions in $\Omega$ of 
\begin{equation*}
F(u,Du,D^2u)= f(x)\;\;\text{and}\;\;F(v,Dv,D^2v)= g(x)
\end{equation*}
where $f,g\in C(\Omega)$ and $F$ satisfies (\ref{SC}). Then the difference $w=u-v$ is a viscosity solution of the maximal differential inequality
$${{\cal P}^+_{\lambda,\Lambda|p}}(D^2w)+b|Dw|-cw\geq f(x)-g(x)\quad\text{in $\Omega$}.$$
\end{lemma}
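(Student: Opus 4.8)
The plan is to argue directly from the test-function definition of a viscosity subsolution, using the Theorem on Sums to split the second-order information carried separately by $u$ and $v$ and to place it at a common point.

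First I would reduce to the following: fix $\phi\in C^2(\Omega)$ such that $w-\phi$ has a local maximum at some $x_0\in\Omega$ with $w(x_0)>-\infty$. As observed after the definition of viscosity solution we may assume the maximum is strict and attained in a closed ball $\overline B=\overline B_\rho(x_0)$ with $\overline B\subset\Omega$. Since $w(x_0)>-\infty$ while $u<+\infty$ and $v>-\infty$ everywhere, both $u(x_0)$ and $v(x_0)$ are finite. For $\varepsilon>0$ consider the upper semicontinuous function
\[
\Phi_\varepsilon(x,y)=u(x)-v(y)-\phi(x)-\frac{1}{2\varepsilon}|x-y|^2,\qquad (x,y)\in\overline B\times\overline B,
\]
and pick a maximum point $(x_\varepsilon,y_\varepsilon)$. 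By the standard doubling-of-variables lemma (see e.g. \cite[Lemma 3.1]{USER}), as $\varepsilon\to0$ one has $x_\varepsilon,y_\varepsilon\to x_0$, $\tfrac1\varepsilon|x_\varepsilon-y_\varepsilon|^2\to0$, $u(x_\varepsilon)\to u(x_0)$ and $v(y_\varepsilon)\to v(x_0)$; in particular $(x_\varepsilon,y_\varepsilon)$ lies in the interior of $\overline B\times\overline B$ for $\varepsilon$ small.

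Next I would apply the Theorem on Sums (\cite[Theorem 3.2]{USER}) to $u(x)-v(y)$ against the $C^2$ function $\varphi(x,y)=\phi(x)+\tfrac1{2\varepsilon}|x-y|^2$ at $(x_\varepsilon,y_\varepsilon)$: writing $p_\varepsilon=\tfrac1\varepsilon(x_\varepsilon-y_\varepsilon)$, there exist $X_\varepsilon,Y_\varepsilon\in{\cal S}^n$ with
\[
\bigl(D\phi(x_\varepsilon)+p_\varepsilon,\,X_\varepsilon\bigr)\in\overline J^{2,+}u(x_\varepsilon),\qquad
\bigl(p_\varepsilon,\,Y_\varepsilon\bigr)\in\overline J^{2,-}v(y_\varepsilon),
\]
and $\begin{pmatrix}X_\varepsilon&0\\0&-Y_\varepsilon\end{pmatrix}\le M_\varepsilon+\varepsilon M_\varepsilon^2$ where $M_\varepsilon=D^2\varphi(x_\varepsilon,y_\varepsilon)$. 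Testing this matrix inequality against vectors of the form $(\xi,\xi)$, the contribution of the $\tfrac1\varepsilon|x-y|^2$-block cancels, leaving $X_\varepsilon-Y_\varepsilon\le D^2\phi(x_\varepsilon)+\varepsilon\|D^2\phi(x_\varepsilon)\|^2 I=:Z_\varepsilon$. Now use that $u$ is a subsolution and $v$ a supersolution (the inequalities extend to the closures $\overline J^{2,\pm}$ by continuity of $F$): $F(u(x_\varepsilon),D\phi(x_\varepsilon)+p_\varepsilon,X_\varepsilon)\ge f(x_\varepsilon)$ and $F(v(y_\varepsilon),p_\varepsilon,Y_\varepsilon)\le g(y_\varepsilon)$. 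Subtracting and invoking (\ref{SC}) with $t=u(x_\varepsilon)$, $s=v(y_\varepsilon)$, $\eta=D\phi(x_\varepsilon)+p_\varepsilon$, $\xi=p_\varepsilon$, $Y=X_\varepsilon$, $X=Y_\varepsilon$ gives
\[
f(x_\varepsilon)-g(y_\varepsilon)\le{\cal P}^+_{\lambda,\Lambda|p}(X_\varepsilon-Y_\varepsilon)+b\,|D\phi(x_\varepsilon)|-c\bigl(u(x_\varepsilon)-v(y_\varepsilon)\bigr).
\]
Since ${\cal P}^+_{\lambda,\Lambda|p}$ is degenerate elliptic (Lemma \ref{Pucci-p}) and $X_\varepsilon-Y_\varepsilon\le Z_\varepsilon$, the first term on the right may be replaced by ${\cal P}^+_{\lambda,\Lambda|p}(Z_\varepsilon)$; letting $\varepsilon\to0$ and using the convergences above together with continuity of $f,g,D\phi,D^2\phi$ and of ${\cal P}^+_{\lambda,\Lambda|p}$ (so $Z_\varepsilon\to D^2\phi(x_0)$) yields
\[
{\cal P}^+_{\lambda,\Lambda|p}(D^2\phi(x_0))+b\,|D\phi(x_0)|-c\,w(x_0)\ge f(x_0)-g(x_0),
\]
which is exactly the viscosity subsolution inequality for $w$.

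The main obstacle is the middle step: producing the pair $(X_\varepsilon,Y_\varepsilon)$ with a controlled difference relies on the Theorem on Sums, and one must check with some care that the perturbation $\varepsilon M_\varepsilon^2$ contributes, after testing against diagonal vectors $(\xi,\xi)$, only an $O(\varepsilon)$ multiple of the identity (the singular $\tfrac1\varepsilon$-block squares to twice itself and annihilates $(\xi,\xi)$, so no $\tfrac1\varepsilon$ term survives). The accompanying semicontinuity and convergence properties of the maximizers $(x_\varepsilon,y_\varepsilon)$ are routine but must be quoted correctly, and one should keep track of the fact that the argument only needs to be run at points $x_0$ where $w(x_0)>-\infty$, so that $u(x_0),v(x_0)$ are genuinely finite.
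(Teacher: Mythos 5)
Your proof is correct and follows essentially the same route as the paper's: doubling of variables, the convergence lemma \cite[Lemma 3.1]{USER}, the Theorem on Sums \cite[Theorem 3.2]{USER}, the structure condition (\ref{SC}), and degenerate ellipticity of ${\cal P}^+_{\lambda,\Lambda|p}$ in the limit. The only cosmetic difference is that the paper absorbs $\phi$ into $u$ before invoking the Theorem on Sums (so the coupling term is just $\frac{\alpha}{2}|x-y|^2$, the block inequality yields $X_\alpha\le Y_\alpha$ directly, and $D^2\phi$ is reinstated via $\overline J^{2,+}(u-\phi)(x_\alpha)\mapsto \overline J^{2,+}u(x_\alpha)$), whereas you keep $\phi$ in the $C^2$ test function $\varphi(x,y)$ and therefore need the extra, but correct, computation that the $(\xi,\xi)$-test of $M_\varepsilon+\varepsilon M_\varepsilon^2$ kills all $\frac1\varepsilon$-terms and leaves only $D^2\phi(x_\varepsilon)+\varepsilon(D^2\phi(x_\varepsilon))^2$.
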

\begin{proof}
Let $x_0\in\Omega$, $\phi\in C^2(\Omega)$ such that $w(x_0)>-\infty$ and $u-v-\phi$ has a strict local  maximum at $x_0$. Choose $r>0$ such that
\begin{equation}\label{eq1lem1}
(u-v-\phi)(x_0)>(u-v-\phi)(x)\quad\forall x\in \overline B_r(x_0)\backslash\left\{x_0\right\}.
\end{equation}
The proof follows the lines of \cite[Chapter 3]{USER}. For $\alpha>0$ let 
$$(u-\phi)(x_\alpha)-v(y_\alpha)-\frac{\alpha}{2}|x_\alpha-y_\alpha|^2=\max_{\overline B_r(x_0)\times \overline B_r(x_0)}\left((u-\phi)(x)-v(y)-\frac{\alpha}{2}|x-y|^2\right).$$
In view of \cite[Lemma 3.1]{USER} we may assume, up to a subsequence, that $(x_\alpha,y_\alpha)\to(\tilde{x},\tilde{x})$, with $\tilde{x}\in\overline B_r(x_0)$ and 
\begin{equation*}
\begin{split}
(u-\phi-v)(\tilde{x})&=\lim_{\alpha\to+\infty}\left((u-\phi)(x_\alpha)-v(y_\alpha)-\frac{\alpha}{2}|x_\alpha-y_\alpha|^2\right)\\
&\geq\max_{\overline B_r(x_0)}\left(u-\phi-v\right)(x)=(u-\phi-v)(x_0).
\end{split}
\end{equation*}
From (\ref{eq1lem1}) we deduce that $\tilde x=x_0$ as well as $u(x_\alpha)\to u(x_0)$ and $v(y_\alpha)\to v(x_0)$ as $\alpha\to+\infty$. Using \cite[Theorem 3.2]{USER} there exist $X_\alpha$, $Y_\alpha\in {\cal S}^n$ such that
\begin{equation}\label{eq2lem1}
\left(\alpha(x_\alpha-y_\alpha),X_\alpha\right)\in\overline J^{2,+}(u-\phi)(x_\alpha)
\end{equation}
\begin{equation}\label{eq3lem1}
\left(\alpha(x_\alpha-y_\alpha),Y_\alpha\right)\in\overline J^{2,-} v(y_\alpha)
\end{equation}
and
\begin{equation}\label{eq4lem1}
\left( 
\begin{array}{cc}
	X_\alpha & 0\\
	0 & -Y_\alpha
\end{array}\right)\leq
3\alpha\left( 
\begin{array}{cc}
	I & -I\\
	-I & I
\end{array}\right)\,
\end{equation}
which implies  $X_\alpha\leq Y_\alpha$. As a consequence of (\ref{eq2lem1}) we have
\begin{equation}\label{eq5lem1}
\left(\alpha(x_\alpha-y_\alpha)+D\phi(x_\alpha),X_\alpha+D^2\phi(x_\alpha)\right)\in\overline J^{2,+} u(x_\alpha)
\end{equation}
and by (\ref{SC})
\begin{equation*}
\begin{split}
f(x_\alpha)-g(y_\alpha)&\leq F\left(u(x_\alpha),\alpha(x_\alpha-y_\alpha)+D\phi(x_\alpha),X_\alpha+D^2\phi(x_\alpha)\right)\\
&-F\left(v(y_\alpha),\alpha(x_\alpha-y_\alpha),Y_\alpha\right)\\
&\leq{{\cal P}^+_{\lambda,\Lambda|p}}\left(X_\alpha-Y_\alpha+D^2\phi(x_\alpha)\right)+b|D\phi(x_\alpha)|-c(u(x_\alpha)-v(y_\alpha))\\
&\leq{{\cal P}^+_{\lambda,\Lambda|p}}\left(D^2\phi(x_\alpha)\right)+b|D\phi(x_\alpha)|-c(u(x_\alpha)-v(y_\alpha)).
\end{split}
\end{equation*}
Letting $\alpha\to+\infty$ we conclude that
$$(f-g)(x_0)\leq{{\cal P}^+_{\lambda,\Lambda|p}}\left(D^2\phi(x_0)\right)+b|D\phi(x_0)|-c(u(x_0)-v(x_0)).$$
\end{proof}

\noindent Let 
$\Omega$ be a domain $\mathbb R^n$ and $E$ be a closed subset of $\overline\Omega$ with empty interior. 
Following Harvey and Lawson \cite[Sections 3 and 6]{HL2}, we define the upper semicontinuous extension across $E$ of a function $u \in USC(\Omega\backslash E)$, bounded above, setting
$$U(x)\equiv\limsup_{\substack{y\to x\\ y\notin E}}u(y)= \lim_{\varepsilon \to 0^+}\sup_{\substack{y \in 
B_\varepsilon(x) \\ y\in \Omega\backslash E}} u(y)\,.$$
Note that $U(x)$ is the upper semicontinuous regularization $\tilde u^*$ of the function 
\begin{equation*}
\tilde u(x)=\begin{cases}
u(x) & \text{outside $E$}\\
-\infty & \text{on $E$.}
\end{cases}
\end{equation*}
In the same manner, the lower semicontinuous extension across $E$ of a function $v \in LSC(\Omega\backslash E)$, bounded below, is defined setting
$$V(x)\equiv\liminf_{\substack{y\to x\\ y\notin E}}v(y)= \lim_{\varepsilon \to 0^+}\inf_{\substack{y \in 
B_\varepsilon(x) \\ y\in \Omega\backslash E}} v(y)\,.$$
The function $V\in LSC(\Omega)$ and coincide with the lower semicontinuous regularization $\tilde v_*$ of 
\begin{equation*}
\tilde v(x)=\begin{cases}
v(x) & \text{outside $E$}\\
+\infty & \text{on $E$.}\end{cases}
\end{equation*}

\section{A priori bounds}\label{A priori bound}

\noindent In this Section we show above estimates for viscosity subsolutions of the maximal equation (\ref{eq5intr}).

\begin{proposition}\label{MPsmall}
Let  $\Omega \subseteq B_\delta$ be a domain of $\mathbb R^n$  and suppose that  $u\in USC(\Omega)$ is a viscosity subsolution of 
$${{\cal P}^+_{\lambda,\Lambda|p}}(D^2u(x))+b|Du(x)|=f(x)\quad\text{ in $\Omega$}$$
with $f\in C(\Omega)$. If 
\begin{equation}\label{bdelta}
b\delta<{\lambda}p
\end{equation}
then
\begin{equation}\label{MP}
\sup_\Omega u\leq\limsup_{y\to\partial\Omega}u(y)+C\left\|f^-\right\|_\infty
\end{equation}
where  $C$ is a constant depending only on {$\lambda$}, $p$, $b$ and $\delta$.
\end{proposition}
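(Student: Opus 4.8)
The plan is to produce an explicit supersolution (a barrier) of the form $w(x) = M + K(g(|x|))$ that dominates $u$ on the boundary and beats the right-hand side inside, then invoke a comparison argument via Lemma \ref{lemdifference}. Since the operator ${\cal P}^+_{\lambda,\Lambda|p}$ is only degenerate elliptic, the barrier cannot be taken radial-convex in the usual Laplacian way; instead I would exploit the eigenvalue structure of the Hessian of a radial function. For $\phi(x) = \psi(|x|)$ with $r = |x|$, the Hessian $D^2\phi$ has eigenvalue $\psi''(r)$ (in the radial direction) and eigenvalue $\psi'(r)/r$ with multiplicity $n-1$ (in the tangential directions). The key point is that ${\cal P}^+_{\lambda,\Lambda|p}$ at such a matrix picks out the $p$ largest of these $n$ numbers, weighted by $\Lambda$ on the positive part and $\lambda$ on the negative part. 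I would try $\psi(r) = -r^2$ or, more flexibly, $\psi(r) = A - B r^2$ with $B>0$: then $\psi'' = -2B < 0$ and $\psi'/r = -2B < 0$, so all eigenvalues equal $-2B$, and ${\cal P}^+_{\lambda,\Lambda|p}(D^2\phi) = -2B\lambda p$. Meanwhile $|D\phi| = 2B|x| \le 2B\delta$. So $\phi$ is a subsolution-beating function precisely when $-2B\lambda p + b\cdot 2B\delta \le -\kappa < 0$ for some $\kappa>0$, i.e. when $b\delta < \lambda p$ — which is exactly hypothesis (\ref{bdelta}). This is where the smallness condition enters and is the crux of the argument.

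Concretely, set $\kappa = 2B(\lambda p - b\delta) > 0$ and note $\kappa$ can be made any positive number by choosing $B$; I fix $B = 1$ and $\kappa = 2(\lambda p - b\delta)$. Define, for $\|f^-\|_\infty =: N$,
$$v(x) = m + \frac{N}{\kappa}\bigl(\delta^2 - |x|^2\bigr), \qquad m := \limsup_{y\to\partial\Omega} u(y).$$
Then $v \in C^2(B_\delta)$ and a direct computation using positive homogeneity (Lemma \ref{Pucci-p}, property b) gives
$${\cal P}^+_{\lambda,\Lambda|p}(D^2 v) + b|Dv| = \frac{N}{\kappa}\Bigl(-2\lambda p + 2b|x|\Bigr) \le \frac{N}{\kappa}\bigl(-2\lambda p + 2b\delta\bigr) = -N \le f(x),$$
so $v$ is a (classical, hence viscosity) supersolution of ${\cal P}^+_{\lambda,\Lambda|p}(D^2v) + b|Dv| = f$ in $\Omega$. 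On $\partial\Omega$ we have $\limsup_{y\to\partial\Omega} u(y) \le m \le v$ since $\delta^2 - |x|^2 \ge 0$ on $\overline\Omega \subseteq \overline B_\delta$.

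It remains to run the comparison. By Lemma \ref{lemdifference} applied with $F(t,\eta,Y) = {\cal P}^+_{\lambda,\Lambda|p}(Y) + b|\eta|$ (so $c=0$), the difference $w = u - v \in USC(\Omega)$ satisfies ${\cal P}^+_{\lambda,\Lambda|p}(D^2 w) + b|Dw| \ge f - f = 0$ in the viscosity sense, and $w$ is bounded above with $\limsup_{y\to\partial\Omega} w(y) \le 0$. The final step is to conclude $w \le 0$ in $\Omega$, i.e. a maximum principle for bounded-above viscosity subsolutions of ${\cal P}^+_{\lambda,\Lambda|p}(D^2w) + b|Dw| \ge 0$ with nonpositive boundary limsup. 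If this is available as a prior fact one quotes it; otherwise, the clean self-contained route is to argue directly: suppose $\sup_\Omega w = \sigma > 0$. For small $\eta > 0$ the function $w_\eta(x) = w(x) + \eta(|x|^2 - \delta^2)$ still has $\sup_\Omega w_\eta > 0$ for $\eta$ small (since the sup of $w$ is strictly positive and $|x|^2 - \delta^2$ is bounded), while $\limsup_{y\to\partial\Omega} w_\eta \le 0$, so $w_\eta$ attains an interior maximum at some $x_\eta \in \Omega$; there $(0,0) \in J^{2,+}w_\eta(x_\eta) $, wait — one perturbs to get $(D(\eta(|x|^2-\delta^2))(x_\eta), \ldots)$; testing the subsolution inequality for $w$ with the smooth function $-\eta(|x|^2-\delta^2) + \text{const}$ gives ${\cal P}^+_{\lambda,\Lambda|p}(-2\eta I) + b\,|{-2\eta x_\eta}| \ge 0$, i.e. $-2\eta\lambda p + 2\eta b\delta \ge 0$, i.e. $\lambda p \le b\delta$, contradicting (\ref{bdelta}). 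Hence $w \le 0$, which is exactly $u \le m + \frac{N}{\kappa}(\delta^2 - |x|^2) \le m + \frac{\delta^2}{\kappa}N$, giving (\ref{MP}) with $C = \delta^2/\kappa = \delta^2/(2(\lambda p - b\delta))$ — a constant depending only on $\lambda, p, b, \delta$, as claimed.

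The main obstacle is the comparison/maximum-principle step, since the operator is genuinely degenerate (so one cannot lean on uniform-ellipticity machinery or strong maximum principles); the resolution is that the very same inequality $b\delta < \lambda p$ that makes the barrier work also powers the contradiction at an interior maximum of a strict perturbation, so the degeneracy is harmless here. A minor technical care is needed in the perturbation-and-jet bookkeeping (ensuring the perturbed function genuinely attains an interior max and the semijet of $w$ at that point is controlled), but this is the standard viscosity-solution doubling/perturbation routine and follows the lines of \cite[Chapter 3]{USER}.
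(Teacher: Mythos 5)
Your proof is correct and follows essentially the same route as the paper: the same quadratic barrier $v(x)=\limsup_{\partial\Omega}u+\gamma(\delta^2-|x|^2)$ with $\gamma=\|f^-\|_\infty/(2(\lambda p-b\delta))$, the same reduction to the homogeneous maximum principle for $w=u-v$, and the same interior-maximum contradiction via the test function $-\eta(|x|^2-\delta^2)$, yielding the identical constant $C=\delta^2/(2(\lambda p-b\delta))$. The only cosmetic difference is that you invoke Lemma~\ref{lemdifference} to pass to $w$, whereas the paper exploits directly that $v$ is smooth and uses the subadditivity of ${\cal P}^+_{\lambda,\Lambda|p}$; both are valid.
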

\begin{proof} Suppose firstly that $f\equiv0$ and $\limsup_{y\to\partial\Omega}u(y)\leq0$\,:
we claim that $u\leq0$ in $\Omega$. \\
By contradiction, suppose $u(\overline x)>0$ for $\overline x\in\Omega$. Take a positive number $\varepsilon<u(\overline x)\delta^{-2}$ and consider $\phi(x)=-\varepsilon |x|^2$. The difference $u-\phi$ attains its maximum at a point $x_\varepsilon$ inside $\Omega$ considering that $\forall x\in\partial\Omega$ 
$$\limsup_{y\to x}(u-\phi)(y)\leq\varepsilon  |x|^2\leq\varepsilon  \delta^2<u(\overline x)\leq (u-\phi)(\overline x).$$ 
Using $\phi$ as test function at $x_\varepsilon$ we get the contradiction
\begin{equation*}
0\leq {\cal P}^+_{\lambda,\Lambda|p}(D^2\phi(x_\varepsilon))+b|D\phi(x_\varepsilon)|=-2\varepsilon\lambda p+2b\varepsilon|x_\varepsilon|<0,% \leq2\varepsilon(-\lambda p+b\delta)<0,
\end{equation*}
which proves what claimed: $u\leq0$ in $\Omega$.
\noindent For the general case,  note that the smooth function
\begin{equation}\label{v}
v(x)=\gamma\left(\delta^2-\left|x\right|^2\right)+\limsup_{y\to\partial\Omega}u(y),
\end{equation}
with $\gamma=\frac{\left\|f^-\right\|_\infty}{2({\lambda}p-b\delta)}$,
 is a classical supersolution in $\Omega$ of the equation
$${{\cal P}^+_{\lambda,\Lambda|p}}(D^2v(x))+b\left|Dv(x)\right|=-\left\|f^-\right\|_\infty,$$
while the difference $w=u-v$ satisfies in $\Omega$ the inequalities
\begin{equation*}
\begin{split}
{{\cal P}^+_{\lambda,\Lambda|p}}(D^2w(x))+b\left|Dw(x)\right|&\geq {{\cal P}^+_{\lambda,\Lambda|p}}(D^2u(x))+b\left|Du(x)\right|\\&-\,{{\cal P}^+_{\lambda,\Lambda|p}}(D^2v(x))-b\left|Dv(x)\right|\\
&\geq f(x)+\left\|f^-\right\|_\infty\geq0
\end{split}
\end{equation*}
in the viscosity sense. \\
Since $\displaystyle\limsup_{x\to\partial\Omega}w(x)\leq0$, we deduce from the previous case that $\displaystyle\sup_{x\in\Omega}w(x)\leq0$. Hence we obtain estimate (\ref{MP}) with $C=\frac{\delta^2}{2({\lambda}p-b\delta)}$.
\end{proof}

\begin{remark}
\rm In the case $p=n$,  the degenerate elliptic operator of the above Proposition is in fact uniformly elliptic, since second order term is the Pucci maximal operator ${\cal M}^+_{\lambda,\Lambda}\equiv {\cal P}^+_{\lambda,\Lambda|n}$. It is worth to recall that in this case estimate (\ref{MP}) holds true without any restriction on the size of $b\delta$.
\end{remark}

\begin{remark}
\rm The validity of the Maximum Principle fails to hold, when $p<n$, for large domains or large gradient coefficients and the bound  for the product $b\delta$ of Proposition \ref{MPsmall} is optimal, as the counterexample here below shows.
\end{remark}

\noindent \textbf{Counterexample.} For a sufficiently small positive $\varepsilon$, say $\varepsilon <\frac \pi{6}$, let use define the radial function ($r=|x|$) in $B_\delta$, where $\delta=\frac{\pi}{2} + \frac\varepsilon2$:
\begin{equation}\label{counterex}
u(x)= \left\{\begin{array}{ll}
\cos{\frac\varepsilon2} & {\rm if} \ r \le \frac{\pi}{2} - \frac\varepsilon2\\
\sin |x| & {\rm if} \ \frac{\pi}{2} - \frac\varepsilon2 \le r \le \frac{\pi}{2} + \frac\varepsilon2
\end{array}\right.
\end{equation}
The eigenvalues of $D^2u(x)$ are
\begin{equation}
e_1(D^2u(x))=-\sin r,\;\;e_2(D^2u(x))=\ldots=e_n(D^2u(x))=\frac{\cos r}{r},
\end{equation}
in the annular domain $r \in (\frac{\pi}{2} - \frac\varepsilon2, \frac{\pi}{2} + \frac\varepsilon2)$, where, choosing
\begin{equation}\label{eqcoefficient}
b = \frac{\lambda p}{\delta-\varepsilon}\,,
\end{equation}
 we get
\begin{equation}
\begin{split}
&\quad\;{\cal P}^+_{\lambda,\Lambda|p}(D^2u(x))+b|Du(x)|\\&\equiv \Lambda\sum_{i=n-p+1}^ne^+_{i}(D^2u(x))-\lambda\sum_{i=n-p+1}^ne^-_{i}(D^2u(x))+b|Du(x)|\\
&=\Lambda p\,\frac{(\cos r)^+}{r}-\lambda p\,\frac{(\cos r)^-}{r}+b\,|\cos r|\\
&\geq\frac pr\left(\Lambda(\cos r)^+-\lambda(\cos r)^-+\lambda|\cos r|\right)\geq0\,.
\end{split}
\end{equation}
By viscosity, it follows that $u(x)$ is a subsolution of equation  ${\cal P}^+_p(D^2u)+b|Du| = 0$ in all $B_\delta$, but $\max_{\overline B_\delta}u=1 > \cos{\frac\varepsilon2} = \max_{\partial B_\delta}u$, so contradicting the Maximum Principle. By (\ref{eqcoefficient}) this occurs as soon as $b\delta >\lambda p$.%\\

\begin{remark}
It is worth to point out that the validity of the Maximum Principle depends in essential way on the diameter of the domain $\Omega$, once fixed $b$, and in general it is not possible to relax this dependence by requiring that the measure $|\Omega|$ has to be sufficiently small, as in the case of narrow domains. To see this, it is sufficient to consider in the previous counterexample the restriction of $u$ to the spherical shell  $\Omega_\varepsilon=\left\{x\in\mathbb R^n:\,\frac{\pi}{2} - \frac\varepsilon2 \leq|x|\leq \frac{\pi}{2} + \frac\varepsilon2 \right\}$, observing that $|\Omega_\varepsilon|\to0$ as $\varepsilon\to0^+$, while 
$\max_{\overline\Omega_\varepsilon}u>\max_{\partial \Omega_\varepsilon}u.$
\end{remark}

\noindent The above argument clarifies that, differently from the uniformly elliptic case, the size of the gradient term has to be suitably small in order that the Maximum Principle continue to hold. The restriction (\ref{bdelta}) of Proposition \ref{MPsmall} will be not necessary if we deal with elliptic equations coercive with respect to zero order term, namely
\begin{equation}\label{eqmp1}
{\cal P}^+_{\lambda,\Lambda|p}(D^2u)+b|Du|-cu=f(x)
\end{equation}
where $c$ is a positive constant, see \cite[Theorem 3.3]{USER}.\\
To see how this condition does work, suppose $u(x)$ to be a subsolution of equation (\ref{eqmp1}) with $f(x)=0$. 
If $u(x)$ would have a positive maximum $u(x_0)=M$ at $x_0 \in \Omega$, then the constant function $u=M$ should be a test function at $x_0$ and therefore
\begin{equation}\label{eqmp1ter}
0 = {\cal P}^+_{\lambda,\Lambda|p}(D^2M)+b|DM| \ge cM >0\,
\end{equation}
a contradiction that shows $u \le 0$ in $\Omega$, and Maximum Principle holds true.\\
Following the proof of Proposition \ref{MPsmall}, we also establish here below the analogous of estimate (\ref{MP}) in the case $c>0$ without any condition on the size of the gradient term and of the domain. 

\begin{proposition}\label{MPc}
Let $\Omega \subseteq B_\delta$ be a domain of $\mathbb R^n$ and let 
$u\in USC(\Omega)$ be a viscosity subsolution of equation $(\ref{eqmp1})$ in $\Omega$ with $f\in C(\Omega)$. If $c>0$, 
then
\begin{equation}\label{MP2}
\sup_\Omega u\leq\limsup_{y\to\partial\Omega}u^+(y)+C\left\|f^-\right\|_\infty
\end{equation}
where  $C$ is a constant depending only on {$\lambda$}, $p$, $b$, $c$ and $\delta$.
\end{proposition}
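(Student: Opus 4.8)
The plan is to follow the two-step scheme of the proof of Proposition~\ref{MPsmall}, letting the coercive zero order term $-cu$ play the role that the smallness assumption $b\delta<\lambda p$ played there. We may assume $\|f^-\|_\infty<\infty$, otherwise there is nothing to prove. \emph{Step 1 (homogeneous case).} I would first show that if $f\equiv 0$ and $\limsup_{y\to\partial\Omega}u(y)\le 0$, then $u\le 0$ in $\Omega$. Arguing by contradiction, suppose $u(\bar x)>0$ for some $\bar x\in\Omega$, and for $0<\varepsilon<u(\bar x)\delta^{-2}$ set $\phi(x)=-\varepsilon|x|^2$. Exactly as in Proposition~\ref{MPsmall}, since $\limsup_{y\to x}(u-\phi)(y)\le\varepsilon\delta^2<(u-\phi)(\bar x)$ for every $x\in\partial\Omega$ while $u\in USC(\Omega)$ is bounded above on the bounded set $\Omega$, the maximum of $u-\phi$ over $\Omega$ is attained at an interior point $x_\varepsilon$. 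Testing the subsolution inequality at $x_\varepsilon$ with $\phi$, using $D^2\phi=-2\varepsilon I$ and $|D\phi(x_\varepsilon)|=2\varepsilon|x_\varepsilon|\le 2\varepsilon\delta$, gives
\[
0\le {\cal P}^+_{\lambda,\Lambda|p}(-2\varepsilon I)+b|D\phi(x_\varepsilon)|-c\,u(x_\varepsilon)\le -2\varepsilon\lambda p+2b\varepsilon\delta-c\,u(x_\varepsilon).
\]
Since $(u-\phi)(x_\varepsilon)\ge (u-\phi)(\bar x)\ge u(\bar x)$ and $-\phi\le\varepsilon\delta^2$ on $\Omega$, we get $u(x_\varepsilon)\ge u(\bar x)-\varepsilon\delta^2$, hence $c\,u(\bar x)\le\varepsilon\,(2b\delta-2\lambda p+c\delta^2)$ for every admissible $\varepsilon$; letting $\varepsilon\to 0^+$ contradicts $u(\bar x)>0$.

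\emph{Step 2 (general case).} Set $m=\limsup_{y\to\partial\Omega}u^+(y)\ge 0$ (finite since $u$ is bounded above) and use the \emph{constant} comparison function $v\equiv \|f^-\|_\infty/c+m$. Because $Dv=0$, $D^2v=0$ and $v\ge 0$, we have ${\cal P}^+_{\lambda,\Lambda|p}(D^2v)+b|Dv|-cv=-cv\le-\|f^-\|_\infty$, so $v$ is a smooth (hence viscosity) supersolution of ${\cal P}^+_{\lambda,\Lambda|p}(D^2v)+b|Dv|-cv=-\|f^-\|_\infty$ in $\Omega$. Applying Lemma~\ref{lemdifference} with $F(t,\xi,X)={\cal P}^+_{\lambda,\Lambda|p}(X)+b|\xi|-ct$ (which satisfies (\ref{SC}) with equality, by subadditivity of ${\cal P}^+_{\lambda,\Lambda|p}$ and the triangle inequality), the difference $w=u-v$ is a viscosity subsolution of
\[
{\cal P}^+_{\lambda,\Lambda|p}(D^2w)+b|Dw|-cw\ \ge\ f(x)+\|f^-\|_\infty\ \ge\ 0\qquad\text{in }\Omega
\]
(this can also be checked directly, as $v$ is constant). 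Moreover $\limsup_{y\to\partial\Omega}w(y)=\limsup_{y\to\partial\Omega}u(y)-v\le m-v=-\|f^-\|_\infty/c\le 0$. By Step~1 applied to $w$, we conclude $w\le 0$ in $\Omega$, that is $u\le v$, which is exactly (\ref{MP2}) with $C=1/c$ (a fortiori a constant depending only on $\lambda$, $p$, $b$, $c$ and $\delta$).

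\emph{Where the difficulty lies.} The only delicate point is Step~1. Unlike the case $c=0$ of Proposition~\ref{MPsmall}, when $b\delta\ge\lambda p$ the perturbation term ${\cal P}^+_{\lambda,\Lambda|p}(D^2\phi)+b|D\phi|=-2\varepsilon\lambda p+2b\varepsilon|x_\varepsilon|$ may be nonnegative, so the quadratic test function by itself does not yield a contradiction. What saves the argument is that this defect is only $O(\varepsilon)$, whereas the coercive term supplies the $\varepsilon$-independent quantity $-c\,u(x_\varepsilon)$, which stays bounded away from $0$ from above precisely because $u(x_\varepsilon)\ge u(\bar x)-\varepsilon\delta^2$ is bounded below by a positive constant; letting $\varepsilon\to 0^+$ then closes the estimate. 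Everything else — attainment of the interior maximum, the passage to semijets, and the comparison reduction of Step~2 — is routine. This also explains why the boundary term in (\ref{MP2}) must involve $u^+$ rather than $u$: coercivity in $-cu$ allows a subsolution to rise above a negative boundary datum, so only the nonnegative part of the boundary values can be controlled, and accordingly the constant barrier $v$ must absorb $m=\limsup_{y\to\partial\Omega}u^+$.
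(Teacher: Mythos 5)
Your proof is correct and follows the same two-step barrier scheme as the paper's, but with a modest simplification worth noting. In Step~2 the paper retains a quadratic barrier of the form $v(x)=\gamma\bigl(\delta^2+\tfrac2c(\lambda p-b\delta)^-+\varepsilon-|x|^2\bigr)+\limsup_{\partial\Omega}u^+$, with $\gamma=\|f^-\|_\infty\big/\bigl(2(\lambda p-b\delta)^++c\varepsilon\bigr)$, and then appeals to the maximum principle for the homogeneous coercive equation; this makes the resulting constant $C=\bigl(\delta^2+\tfrac2c(\lambda p-b\delta)^-+\varepsilon\bigr)\big/\bigl(2(\lambda p-b\delta)^++c\varepsilon\bigr)$ depend on an auxiliary parameter $\varepsilon$. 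You instead observe that since $c>0$, the constant barrier $v\equiv\|f^-\|_\infty/c+\limsup_{\partial\Omega}u^+$ already satisfies $-cv\le-\|f^-\|_\infty$, which gives the cleaner constant $C=1/c$ and avoids the $(\lambda p-b\delta)^\pm$ bookkeeping entirely. For Step~1 (the homogeneous maximum principle) you rerun the perturbed-quadratic test-function argument of Proposition~\ref{MPsmall} and let $\varepsilon\to0^+$, relying on $-cu(x_\varepsilon)$ to close the estimate; the paper instead invokes the constant-test-function contradiction in (\ref{eqmp1ter}). The two mechanisms are equivalent (both need $u$ bounded above to attain an interior maximum, an implicit hypothesis in both versions); your variant has the minor advantage of uniformly treating the perturbation and the coercive term in a single limit. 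So: correct, same skeleton, slightly simpler barrier, better constant.
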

\begin{proof} The proof follows the same lines of the second part of Proposition \ref{MPsmall} (general case), but here we have to consider the slightly modified function
\begin{equation}\label{v-c}
v(x)=\gamma\left(\delta^2+\frac{2}{c}\,(\lambda p-b\delta)^-+\varepsilon-\left|x\right|^2\right)+\limsup_{y\to\partial\Omega}u^+(y),
\end{equation}
%V6rev
for any positive number $\varepsilon$ and $\gamma=\frac{\left\|f^-\right\|_\infty}{2(\lambda p-b\delta)^++c\varepsilon}$, in order to get
$${{\cal P}^+_{\lambda,\Lambda|p}}(D^2v(x))+b\left|Dv(x)\right|-cv(x)\le -\left\|f^-\right\|_\infty.$$
Introducing the function $w=u-v$ and using the Maximum Principle deduced from (\ref{eqmp1ter}), as in the last part of the proof of Proposition \ref{MPsmall}, inequality (\ref{MP2}) follows with $C=\frac{\delta^2+\frac2c(\lambda p-b\delta)^-+\varepsilon}{2(\lambda p-b\delta)^++c\varepsilon}$.%V6rev
\end{proof} 

\section{Extended a priori bounds}\label{Extended a priori bound}

\noindent As stated in the Introduction we are concerned with an extended version of the Maximum Principle and more generally of  a priori estimates (\ref{MP}),(\ref{MP2}).\\
{Let ${\cal M}^+_1(E)$ be the set of positive Borel measure on the compact set $E\subset B_d$, normalized by $\mu(E)=1$.\\ Let $\Phi_\alpha(x)=|x|^{-\alpha}$ for $0<\alpha<n$ and 
$\Phi_0=\log\left(\frac{2d}{|x|}\right)$ be the Riesz kernels. \\ 
The $\alpha$-Riesz potential of $\mu\in{\cal M}^+_1(E)$ is defined as
\begin{equation}\label{4eq1}
V_\alpha^\mu(x)=\Phi_\alpha\ast\mu=\int_E\Phi_\alpha(x-y)d\mu(y)\in LSC(\mathbb R^n)\cap C^\infty(\mathbb R^n\backslash E).
\end{equation} 
We also denote by
$$
V_\alpha(E)=\inf_{\substack{\mu\in{\cal M}^+_1(E)}}\int_E V_\alpha^\mu(x)\,d\mu(x)
$$
the $\alpha$-equilibrium value on $E$ and by
$$
{\rm Cap}_\alpha(E)=\left\{\begin{array}{ll}
V_\alpha(E)^{-1} & {\rm if} \ \ \alpha >0,\\ \\
e^{-V_\alpha(E)} & {\rm if} \ \ \alpha =0
\end{array}\right.
$$
the $\alpha$-Riesz capacity of $E$.\\
 
\noindent For arbitrary $E$, not necessarily compact, the \emph{inner} $\alpha$-capacity  $\underline {\rm Cap}_{\,\alpha}(E)$ and the \emph{outer} $\alpha$-capacity $\overline {\rm Cap}_\alpha(E)$ are defined by 
$$\underline {\rm Cap}_{\,\alpha}(E)=\sup_{\substack{K \subset E\\ K \ {\rm compact}}}{\rm Cap}_\alpha(K),\quad \overline {\rm Cap}_\alpha(E)=\inf_{\substack{A \supset E\\ A \ {\rm open}}}\underline{\rm Cap}_{\,\alpha}(A).$$
For any $E$ one has $\underline {\rm Cap}_{\,\alpha}(E)\leq\overline {\rm Cap}_\alpha(E)$. If the equality holds true we say that  $E$ is  $\alpha$-\emph{capacitable} and  ${\rm Cap}_\alpha(E):=\underline {\rm Cap}_{\,\alpha}(E)=\overline {\rm Cap}_\alpha(E)$. Borel sets, in particular $F_\sigma$-sets, are capacitable \cite[Theorem 2.8]{L}. 

\begin{proposition}\label{potential} 
Let $\alpha^*=\frac{\lambda}{\Lambda}(p-1)-1\ge0$, $E$ be a Borel subset of $B_d$ with ${\rm Cap}_\alpha(E)=0$ and $x_0\in B_d\backslash E$. There exists a  non-negative function $v\in LSC(B_d)$ and a non-negative constant $K=K_{\lambda,\Lambda|p}(\alpha;b)$, defined for $\alpha \in [0,\alpha^*]$ if $b=0$,  for $\alpha \in [0,\alpha^*)$ if $\alpha^*>0$ and $b >0$, such that $v(x)=+\infty$ on $E$ and 
\begin{equation}\label{potenziale-supersoln}
{\cal P}^+_{\lambda,\Lambda|p}(D^2v(x))+b|Dv(x)|\leq K\quad\text{in $B_d$}.
\end{equation}
Moreover $v(x)<+\infty$ for any $x\in B_d\backslash E$  if $E$ is compact and $v(x_0)<+\infty$ if $E$ is an $F_\sigma$-set.
\end{proposition}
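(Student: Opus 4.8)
The plan is to construct $v$ as a Riesz potential of a suitable measure, or rather as an increasing limit of such potentials, and to verify that the degenerate Pucci operator applied to it is bounded. First I would treat the model case: fix $\alpha \in [0,\alpha^*]$ (with $b=0$) or $\alpha \in [0,\alpha^*)$ (with $b>0$), and for a compact set $K$ with ${\rm Cap}_\alpha(K)=0$ consider the Riesz potential $V_\alpha^\mu$ of an equilibrium-type measure $\mu \in {\cal M}^+_1(K)$. The key computation is to estimate ${\cal P}^+_{\lambda,\Lambda|p}(D^2 V_\alpha^\mu) + b|D V_\alpha^\mu|$ at a point $x \notin K$. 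Since $\Phi_\alpha(x-y) = |x-y|^{-\alpha}$ is, up to translation, essentially the fundamental solution $\Phi_{\alpha^*}$ when $\alpha = \alpha^*$, and the eigenvalues of $D^2(|z|^{-\alpha})$ are $\alpha(\alpha+1)|z|^{-\alpha-2}$ (simple, radial direction) and $-\alpha|z|^{-\alpha-2}$ ($(n-1)$-fold, tangential), I would compute ${\cal P}^+_{\lambda,\Lambda|p}$ of this Hessian: picking the $p$ largest of the weighted eigenvalues $\Lambda e_i^+ - \lambda e_i^-$ gives, for $\alpha \le \alpha^*$, a nonpositive (or zero, at $\alpha=\alpha^*$) contribution, namely $\left(\Lambda\alpha(\alpha+1) - \lambda(p-1)\alpha\right)|x-y|^{-\alpha-2} = \Lambda\alpha(\alpha - \alpha^*)|x-y|^{-\alpha-2} \le 0$. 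The gradient term contributes $b\,\alpha|x-y|^{-\alpha-1}$, which for $\alpha < \alpha^*$ is dominated near $K$ by the strictly negative second-order term and is bounded away from $K$; this is exactly where the restriction $\alpha < \alpha^*$ (when $b>0$) enters. Using the subadditivity property c) of Lemma \ref{Pucci-p} together with $\Phi_\alpha \ast \mu = \int \Phi_\alpha(\cdot - y)\,d\mu(y)$, I would pass the operator through the integral in the form of an inequality to get ${\cal P}^+_{\lambda,\Lambda|p}(D^2 V_\alpha^\mu) + b|D V_\alpha^\mu| \le K$ in $B_d$ for a constant $K$ depending on $\lambda,\Lambda,p,b,d$ but not on $\mu$ or $K$.

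Next I would handle the fact that ${\rm Cap}_\alpha(K)=0$ means there is no finite-energy equilibrium measure — instead one exploits that $V_\alpha(K) = +\infty$. So I would take a sequence of compact sets or a sequence of probability measures $\mu_k$ whose potentials $V_\alpha^{\mu_k}$ are finite at the fixed base point $x_0$ but with $\int V_\alpha^{\mu_k}\,d\mu_k \to \infty$ appropriately, and form $v = \sum_k c_k V_\alpha^{\mu_k}$ with $\sum_k c_k < \infty$ chosen so that $v(x_0) = \sum_k c_k V_\alpha^{\mu_k}(x_0) < \infty$ while $v \equiv +\infty$ on $K$ (here one uses that a potential of a measure supported on a set of zero capacity must be $+\infty$ at every point of that set in the appropriate sense, or a Borel–Cantelli / diagonal argument over an exhausting sequence). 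The partial sums are continuous viscosity supersolutions (by the computation above and positive homogeneity/subadditivity), and the stability result quoted in the excerpt — the increasing limit of continuous viscosity supersolutions is a supersolution — gives that $v \in LSC(B_d)$ satisfies \eqref{potenziale-supersoln} with the same $K$ (after normalizing $\sum c_k \le 1$, or absorbing the factor into $K$). For the compact case this already yields $v < +\infty$ on $B_d \setminus K$ since each $V_\alpha^{\mu_k}$ is finite off $K$ and the series converges off $K$; for the $F_\sigma$ case $E = \bigcup_j E_j$ I would apply the compact construction to each $E_j$, getting $v_j$ with $v_j = +\infty$ on $E_j$, $v_j(x_0) < \infty$, and $v_j$ a supersolution with constant $K$, then set $v = \sum_j 2^{-j} v_j / (1 + v_j(x_0))$, which is still a supersolution with a constant of the same form, is $+\infty$ on all of $E$, and is finite at $x_0$.

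The main obstacle I expect is the spectral/geometric bookkeeping in the Hessian estimate: one must verify carefully that, after picking the $p$ eigenvalues that maximize the weighted partial sum in the definition of ${\cal P}^+_{\lambda,\Lambda|p}$, the resulting quantity is genuinely $\le \Lambda\alpha(\alpha-\alpha^*)|x-y|^{-\alpha-2}$, i.e. that the optimal choice takes the single large positive radial eigenvalue together with $p-1$ of the small negative tangential ones — and that this holds uniformly as $y$ ranges over the support of $\mu$, so that one can integrate. A secondary technical point is the logarithmic case $\alpha = 0$, where $\Phi_0 = \log(2d/|x|)$ and the eigenvalue computation degenerates: here $D^2\log(1/|z|)$ has eigenvalues $|z|^{-2}$ (simple, radial) and $-|z|^{-2}$ ($(n-1)$-fold, tangential) — wait, the radial eigenvalue is $-|z|^{-2}$ and tangential is $-|z|^{-2}$ as well, so one checks directly that ${\cal P}^+_{\lambda,\Lambda|p}$ of this is $\le 0$ since all weighted eigenvalues are negative, and the gradient term $b/|z|$ is again only a problem if $b>0$ and $\alpha^*=0$, which is excluded. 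Once these sign computations are pinned down, the rest is assembling the series and invoking the stated stability of viscosity supersolutions under increasing limits.
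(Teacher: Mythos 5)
Your proposal follows essentially the same route as the paper's proof: represent $v$ as a Riesz potential, pass $\mathcal{P}^+_{\lambda,\Lambda|p}$ through the integral via the sup-representation/subadditivity, compute the Hessian of the kernel explicitly (radial eigenvalue $\alpha(\alpha+1)|z|^{-\alpha-2}$, tangential eigenvalues $-\alpha|z|^{-\alpha-2}$), note that the optimal choice takes the one positive eigenvalue with $p-1$ of the negative ones to get $\bigl(\Lambda\alpha(\alpha+1)-\lambda(p-1)\alpha\bigr)|z|^{-\alpha-2}\le0$, handle the gradient term by splitting near/far from the singularity for $\alpha<\alpha^*$, and treat $F_\sigma$ sets by a normalized weighted series together with the stated stability under increasing limits.

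Two small corrections worth recording. First, for the compact case the sequence-of-measures detour is unnecessary: the paper invokes Landkof's Theorem 3.1 (Evans' theorem), which for a compact set of zero $\alpha$-capacity furnishes a \emph{single} unit measure $\mu$ with $V^\mu_\alpha\equiv+\infty$ on $E$ and finite off $E$, so one may simply take $v=V^\mu_\alpha$; the series is only needed to glue together the compacts of an $F_\sigma$. Your parenthetical claim that ``a potential of a measure supported on a set of zero capacity must be $+\infty$ at every point of that set'' is not true for an arbitrary $\mu$; it is the existence of \emph{some} such $\mu$ that Evans' theorem supplies. Second, your ``wait'' self-correction in the logarithmic case is itself wrong: the radial eigenvalue of $D^2\log(1/|z|)$ is $+|z|^{-2}$, not $-|z|^{-2}$ (your first computation was correct), so the weighted eigenvalues are not all negative; nonpositivity of $\mathcal{P}^+_{\lambda,\Lambda|p}$ in the case $\alpha=0$ comes from $\Lambda-\lambda(p-1)\le0$, which is exactly the hypothesis $\alpha^*\ge0$, consistent with the paper's unified formula $(\alpha+\delta_{0,\alpha})\bigl(\Lambda(\alpha+1)-\lambda(p-1)\bigr)|z|^{-\alpha-2}$. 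Neither issue affects the validity of the overall argument.
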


\begin{proof}
Firstly assume $E$ to be compact set such that ${\rm Cap}_\alpha(E)=0$. \\
In view of  \cite[Theorem 3.1]{L} there exists a unit positive measure $\mu$ for which $V^\mu_\alpha$
blows up on $E$ and is finite outside $E$:
\begin{equation}\label{4eq2}
V^\mu_\alpha\equiv+\infty\quad\text{on $E$},\quad V^\mu_\alpha<+\infty\quad{\text{in $\mathbb R^n\backslash E$}}.
\end{equation}
Moreover, 
\begin{equation}\label{4eq3}
V^\mu_\alpha \ge 0 \ \  \hbox{\rm in \ }
\left\{\begin{array}{lll}
\mathbb R^n & \hbox{\rm for} \ \alpha >0 \\
B_d & \hbox{\rm for} \ \alpha =0.\\
\end{array}\right.
\end{equation}
\noindent In the case $b=0$, for $x \not\in E$, differentiating under the integral and using the representation of ${\cal P}^+_{\lambda,\Lambda|p}$ as a supremum (\ref{sup/inf:representation}), we have
\begin{equation*}
\begin{split}
{\cal P}^+_{\lambda,\Lambda|p}(D^2V_\alpha^\mu(x))&= {\cal P}^+_{\lambda,\Lambda|p}\left(D^2 \int_E\Phi_\alpha(x-y)d\mu(y)\right)\\
&\leq \int_E {\cal P}^+_{\lambda,\Lambda|p}\left(D^2\Phi_\alpha(x-y)\right)\,d\mu(y)\\
&=\int_E\frac{(\alpha+\delta_{0,\alpha})\left(\Lambda (\alpha+1)-\lambda(p-1)\right)}{|x-y|^{\alpha+2}}\,d\mu(y),
\end{split}
\end{equation*}
where $\delta_{0,\alpha}=1$ if $\alpha=0$, $\delta_{0,\alpha}=0$ otherwise, and the right-hand side is non-positive by assumption $0\le \alpha \le \frac{\lambda}{\Lambda}(p-1)-1$.\\

\noindent 
Supposing $b>0$ and $0 \le \alpha <\frac{\lambda}{\Lambda}(p-1)-1$, we have $\rho = \frac{\lambda(p-1)-\Lambda(\alpha+1)}{b}>0$. Computing as above, for $x \not\in E$ we get

\begin{equation*}
\begin{split}
&\ \ {{{\cal P}^+_{\lambda,\Lambda|p}}}(D^2V_\alpha^\mu(x))+b|DV_\alpha^\mu(x)|\\
\leq&{\int_E {\cal P}^+_{\lambda,\Lambda|p}\left(D^2\Phi_\alpha(x-y)\right)+b\left|D\Phi_\alpha(x-y)\right|\,d\mu(y)}\\
=&\int_E\frac{(\alpha+\delta_{0,\alpha})\left({\Lambda (\alpha+1)-\lambda(p-1)}+b|x-y|\right)}{|x-y|^{\alpha+2}}\,d\mu(y)\\
=&\int_{E\cap B_\rho(x)}\frac{(\alpha+\delta_{0,\alpha})\left({\Lambda (\alpha+1)-\lambda(p-1)}+b|x-y|\right)}{|x-y|^{\alpha+2}}\,d\mu(y)\\
+&\,\int_{E\backslash B_\rho(x)}\frac{(\alpha+\delta_{0,\alpha})\left({\Lambda (\alpha+1)-\lambda(p-1)}+b|x-y|\right)}{|x-y|^{\alpha+2}}\,d\mu(y)\\
=:&\,I_1+I_2, \ \hbox{\rm say}\,.
\end{split}
\end{equation*}

\noindent For $y\in B_\rho(x)$ we have $\Lambda (\alpha+1)-\lambda(p-1)+b|x-y|\leq0$ and so $I_1 \le 0$. On the other hand, for $y\not\in B_\rho(x)$, using the assumption, we can estimate 
$$
\frac{(\alpha+\delta_{0,\alpha})\left(\Lambda (\alpha+1)-\lambda(p-1)+b|x-y|\right)}{|x-y|^{\alpha+2}} \le \frac{(\alpha+\delta_{0,\alpha})\,b}{\rho^{\alpha+1}} 
$$
and hence $I_2 \le K=K_{\lambda,\Lambda|p}(\alpha;b) \equiv \frac{(\alpha+\delta_{0,\alpha})\,b}{\rho^{\alpha+1}}$\,.\\
In the case $E$ compact we can therefore choose $v(x)=V^\mu_\alpha(x)$.\\ 

\noindent Now suppose $E=\bigcup_{m\in\mathbb N}E_m$ where $E_m$ are compact sets. The above argument provides a sequence of measure $\mu_m \in {\cal M}_1^+(E_m)$ such that (\ref{potenziale-supersoln}), (\ref{4eq2}) and (\ref{4eq3}) hold true with $E=E_m$ and $\mu=\mu_m$. \\
Setting $\omega_m(x) = V^{\mu_m}_\alpha(x)$ and
$c_m= \frac1{\max(\omega_m(x_0),1)}$,
the non-negative functions $\displaystyle v_N(x)=\sum_{m=1}^N\frac{c_m}{2^m}\,\omega_m(x)$ are viscosity solutions in $B_d$ of equation
\begin{equation*}
\begin{split}
&\,\,{\cal P}^+_{\lambda,\Lambda|p}(D^2v_N(x))+b|Dv_N(x)| \\
\leq&\,\sum_{m=1}^N\frac{c_m}{2^m}\left({\cal P}^+_{\lambda,\Lambda|p}(D^2\omega_m(x))+b|D\omega_m(x)|\right)\leq K.
\end{split}
\end{equation*}
In this way $\displaystyle v(x)=\lim_{N\to+\infty}v_N(x)$ is in turn a solution of (\ref{potenziale-supersoln}) as limit of a non-decreasing sequence of supersolutions,
and the proof is done observing that, by construction, 
$$v(x)=+\infty\;\;\text{on $E$}, \;\;v(x)\geq0\;\;\text{in $B_d$}$$
and 
$$v(x_0)=\sum_{m=1}^\infty\frac{c_m}{2^m}\,\omega_m(x_0)\leq1.$$ 
\end{proof}

\noindent\\ \emph{Proof of Theorem \ref{th1}}. For $\varepsilon>0$ consider the function 
$w_\varepsilon(x)=u(x)-\varepsilon v(x)$  where $v(x)$ is the 
function provided by Proposition \ref{potential}. By Lemma \ref{lemdifference} we infer that, for $c\geq0$ and $x\in\Omega$,
\begin{equation*}
\begin{split}
{\cal P}^+_{\lambda,\Lambda|p}(D^2w_\varepsilon)+b|Dw_\varepsilon|-cw_\varepsilon 
\geq&\,{{\cal P}^+_{\lambda,\Lambda|p}}(D^2u)+b|Du|-cu\\
-&\,\varepsilon\left({{\cal P}^+_{\lambda,\Lambda|p}}(D^2v)+b|Dv|\right)\\
\geq&\, f(x)-\varepsilon K.
\end{split}
\end{equation*}
From Propositions \ref{MPsmall} and \ref{MPc} we deduce the bounds
\begin{equation}\label{th1eq1}
w_\varepsilon(x)\leq\limsup_{y\to\partial\Omega}w_\varepsilon(y)+C\left\|(f-\varepsilon K)^-\right\|_\infty\quad\forall x\in\Omega
\end{equation}
in the case $c=0$ and
\begin{equation}\label{th1eq2}
w_\varepsilon(x)\leq\limsup_{y\to\partial\Omega}w_\varepsilon^+(y)+C\left\|(f-\varepsilon K)^-\right\|_\infty\quad\forall x\in\Omega
\end{equation}
in the case $c>0$. Since $\displaystyle\lim_{y\to E}w_\varepsilon(y)=-\infty$, the above inequalities yield respectively  
$$u(x_0)-\varepsilon v(x_0)\leq\limsup_{y\to E'}u(y)+C\left\|(f-\varepsilon K)^-\right\|_\infty$$
$$\hskip0.25cm u(x_0)-\varepsilon v(x_0)\leq\limsup_{y\to E'}u^+(y)+C\left\|(f-\varepsilon K)^-\right\|_\infty.$$
Letting $\varepsilon\to0^+$  we conclude the proof, since $x_0\in\Omega$ is arbitrary. \hfill$\Box$\\

\noindent From Theorem \ref{th1} we deduce the following extended comparison principle.\\

\begin{corollary}\label{corthm1}
Assume $\alpha^*=\frac\lambda\Lambda\,(p-1)-1\ge 0$, as in Theorem \ref{th1}. Let $\Omega\subseteq B_\delta$ be a bounded domain of $\mathbb R^n$. Suppose $\partial \Omega =E \cup E'$ where 
$E$ is an  $F_\sigma$-set such that ${\rm Cap}_\alpha(E)=0$ for $\alpha \in [0,\alpha^*]$. %V6  
  Suppose that $u\in USC(\Omega)$ is bounded above, $v\in LSC(\Omega)$ is bounded from below and 
$$F(u,Du,D^2u)\geq f(x),\quad F(v,Dv,D^2v)\leq g(x)$$
in the viscosity sense in $\Omega$ and  $f,g \in C(\Omega)$. \\
Let us consider the following cases: $(i)$ $c=0$; $(ii)$ $c>0$\,.\\
Case $(i)$. If $b\delta <\lambda p$, then
\begin{equation}\label{CP}
\sup_\Omega (u-v)\leq\limsup_{y\to E'}(u-v)(y)+C\left\|(f-g)^-\right\|_\infty
\end{equation}
where $C$ is a constant depending only on $\lambda$, $p$, $b$ and $\delta$.\\
Case $(ii)$. For all $b \ge0$
\begin{equation}\label{CP+}
\sup_\Omega (u-v)\leq\limsup_{y\to E'}(u-v)^+(y)+C\left\|(f-g)^-\right\|_\infty
\end{equation}
where now the constant $C$ depends also on $c$.\\
Estimates (\ref{CP}) and (\ref{CP+}) hold true in general for $0\le \alpha < \alpha^*$ and also for $\alpha=\alpha^*$ if $b=0$.
\end{corollary}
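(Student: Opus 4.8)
The plan is to reduce the corollary directly to Theorem \ref{th1} by passing to the difference $w=u-v$. First I would invoke Lemma \ref{lemdifference}: since $u$ is a viscosity subsolution of $F(u,Du,D^2u)=f(x)$ and $v$ is a viscosity supersolution of $F(v,Dv,D^2v)=g(x)$, with $f,g\in C(\Omega)$ and $F$ satisfying the structure condition (\ref{SC}), the lemma guarantees that $w=u-v$ is a viscosity subsolution in $\Omega$ of the maximal inequality
\[
{\cal P}^+_{\lambda,\Lambda|p}(D^2w)+b|Dw|-cw\geq f(x)-g(x)\,.
\]
This is precisely the step that collapses the pair of $F$-inequalities into a single inequality for the Pucci maximal operator of order $p$, which is the form required by Theorem \ref{th1}.

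Next I would check that $w$ fulfills the remaining hypotheses of Theorem \ref{th1}. Since $u\in USC(\Omega)$ and $v\in LSC(\Omega)$, the function $-v$ lies in $USC(\Omega)$, hence $w=u+(-v)\in USC(\Omega)$; moreover $u$ is bounded above and $-v$ is bounded above (because $v$ is bounded below), so $w$ is bounded above. The right-hand side $f-g$ is continuous on $\Omega$, and the hypotheses on $\lambda,\Lambda,p$ (so that $\alpha^*\ge0$), on the bounded domain $\Omega\subseteq B_\delta$, and on the decomposition $\partial\Omega=E\cup E'$ with $E$ an $F_\sigma$-set of vanishing $\alpha$-capacity, are exactly those imposed in Theorem \ref{th1}.

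Then I would apply Theorem \ref{th1} with $u$ replaced by $w$ and $f$ replaced by $f-g$. In Case $(i)$, that is $c=0$ and $b\delta<\lambda p$, estimate (\ref{MPth1}) yields $\sup_\Omega w\leq\limsup_{y\to E'}w(y)+C\|(f-g)^-\|_\infty$ with $C=C(\lambda,p,b,\delta)$, which is exactly (\ref{CP}); in Case $(ii)$, that is $c>0$ and arbitrary $b\ge0$, estimate (\ref{MP+}) yields $\sup_\Omega w\leq\limsup_{y\to E'}w^+(y)+C\|(f-g)^-\|_\infty$ with $C$ depending also on $c$, which is (\ref{CP+}); and the validity of these bounds for $0\le\alpha<\alpha^*$, and for $\alpha=\alpha^*$ when $b=0$, is inherited verbatim from the corresponding statement in Theorem \ref{th1}. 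Since the argument is a direct substitution into an already-proven theorem, I expect no genuine obstacle: the only point requiring (minor) care is the semicontinuity and boundedness bookkeeping for $w=u-v$ together with the correct bookkeeping of the constants and the capacity ranges carried over from Theorem \ref{th1}.
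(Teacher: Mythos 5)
Your proposal is correct and follows the paper's argument exactly: apply Lemma \ref{lemdifference} to reduce to the maximal inequality for $w=u-v$, then invoke Theorem \ref{th1}. The extra bookkeeping you spell out (that $w\in USC(\Omega)$ is bounded above and $f-g\in C(\Omega)$) is implicit in the paper's two-line proof and is correct.
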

 \begin{proof}
By virtue of Lemma \ref{lemdifference} the difference function $w=u-v$ is a viscosity subsolution in $\Omega$ of the equation
$${{\cal P}^+_{\lambda,\Lambda|p}}(D^2w)+b|Dw|-cw= f(x)-g(x).$$
The conclusion follows from Theorem \ref{th1}.
 \end{proof}

\section{Removable singularities}\label{Removable singularities}

\noindent This Section is devoted to the proof of the removability result Theorem \ref{threm} for the equation $F=f$
 in a domain $\Omega$ except for a closed subset, with respect to the relative topology, having suitable vanishing capacity.\\
Let $u(x)$ be a subsolution, bounded above on bounded sets, in $\Omega \backslash E$.
Following Harvey-Lawson \cite[Sections 3 and 6]{HL2}, we will show that the upper semicontinuous extension $U(x)$ of the subsolution $u(x)$ across $E$, as defined in Section \ref{Viscosity}, is in turn a subsolution of the equation $F=f$ in all $\Omega$.\\
A corresponding result holds true for supersolutions, bounded from below on bounded sets.\\

\noindent
\emph{Proof of Theorem \ref{threm}}. In what follows $U(x)$ and $V(x)$ will be the upper and the lower semicontinuous extension of $u(x)$ across $E$, which are respectively bounded above and below on bounded sets.\\
Let $E:=C\cap \Omega$ where $C$ is a closed set of $\mathbb R^n$, and $x_0 \in C\cap \Omega$ and $\delta$ be a positive number such that $b\delta < \lambda p$, as required by Theorem \ref{th1}, Case (i).\\
 Set also $\Omega_0:= B_\delta(x_0)\subset\Omega$ and $\overline\Omega_0$ the closure of $B_\delta(x_0)$ in the standard topology of $\mathbb R^n$.\\
For any $\varepsilon>0$ consider $w_\varepsilon=U-\varepsilon V^\mu_\alpha\in USC(\Omega_0)\cap C(\Omega_0\backslash E)$, where $V^\mu_\alpha$ is the potential provided by Proposition \ref{potential} which blows up on $C\cap\overline\Omega_0 \supset E\cap \Omega_0$.\\
 As in the proof of Theorem \ref{th1}, since $w_\varepsilon\equiv-\infty$ on $E\cap \Omega_0$, we infer that $w_\varepsilon(x)$ is a subsolution of equation
\begin{equation}\label{weps}
F\left(w_\varepsilon,Dw_\varepsilon,D^2w_\varepsilon\right)=f(x)-\varepsilon K\quad\text{in $\Omega_0$}.
\end{equation}
Moreover, the upper semicontinuous regularization $w^*<+\infty$ of the upper envelope $\displaystyle w=\sup_{0<\varepsilon < \varepsilon_0}w_{\varepsilon}$  is a viscosity subsolution of equation
\begin{equation}\label{wstar}
F\left(w^*,Dw^*,D^2w^*\right)=f(x)-\varepsilon_0 K\quad\text{in $\Omega_0$},
\end{equation}
by the stability results {of Section \ref{Viscosity}}. Since $U(x)=w^*(x)$, letting $\varepsilon_0 \to 0^+$, we conclude that $U(x)$ extends across $E$ the subsolution $u(x)$ in $\Omega_0$, namely
$$
F\left(U(x),DU(x),D^2U(x)\right)\geq f(x)\quad\text{in $\Omega_0$}
$$
and $U(x)=u(x)$ for $x \in \Omega_0\backslash E$.
On the other hand,
$w_{-\varepsilon}=U+\varepsilon V^\mu_\alpha\in LSC(\Omega_0)\cap C(\Omega_0\backslash E)$ 
is a supersolution of equation 
\begin{equation}\label{weps-super}
F\left(w_{-\varepsilon},Dw_{-\varepsilon},D^2w_{-\varepsilon}\right) = f(x)+\varepsilon K\quad\text{in $\Omega_0$},
\end{equation}
and the lower semicontinuous regularization $w_*>-\infty$ of the lower envelope $\displaystyle w=\inf_{0<\varepsilon < \varepsilon_0}w_{-\varepsilon}$ is a viscosity supersolution of equation
\begin{equation}\label{wstar-super}
F\left(w_*,Dw_*,D^2w_*\right)= f(x) +\varepsilon_0 K\quad\text{in $\Omega_0$}.
\end{equation}
As before, since $V(x)=w_*(x)$, letting $\varepsilon_0 \to 0^+$, we conclude that $V(x)$  extends across $E$ the supersolution $u(x)$, namely
$$
F\left(V(x),DV(x),D^2V(x)\right)\leq f(x) \quad\text{in }\Omega_0\,.
$$
and $V(x)=u(x)$ for $x \in \Omega_0\backslash E$.\\
Hence $U(x)$ and  $V(x)$  are respectively subsolution  and supersolution of equation $F=f$ such that $U(x)=u(x)=V(x)$ in $\Omega_0\backslash E$. \\
We claim that $U(x_0)=V(x_0)$. Indeed, by semicontinuity, we have $V(x_0)\le U(x_0)$. On the other hand, since $U(x)=V(x)$ on $\partial \Omega_0$ except a closed set of vanishing capacity, by the extended comparison principle of Corollary \ref{corthm1}, we get $U(x)\le V(x)$ in $\Omega_0\equiv B_\delta(x_0)$  and in particular $U(x_0)\le V(x_0)$. Therefore we conclude that $U(x_0)=V(x_0)$, as claimed.\\
Since $x_0\in E$ is arbitrary, then $U(x)=V(x)$ on $E$, so that, by continuity of $u(x)$, we can conclude that $U(x)=V(x)$ for all $x \in \Omega$. Then the function $W(x)=U(x)=V(x)$ is a viscosity solution of equation $F=f$ in $\Omega$ which extends the given solution $u(x)$ in $\Omega\backslash E$, as asserted.\hfill$\Box$ %V6

\noindent {\bf Acknowledgements}

\noindent The authors wish to thank the Gruppo  Nazionale per l'Analisi Matematica, la Probabilit\`a e le loro  Applicazioni (GNAMPA) of the Istituto Nazionale di Alta Matematica  (INdAM) that partially supported this work.


\begin{thebibliography}{20}

\bibitem{AGV} M.E. Amendola, G. Galise, A. Vitolo, \emph{Riesz capacity, maximum principle and removable sets of fully nonlinear second order elliptic operators}, Differential and Integral Equations {\bf 26}, n. 7-8 (2013),  845-866.

\bibitem{ASiSm} S.N.Armstrong, B.Sirakov and C.K.Smart, \emph{Fundamental Solutions of Homogeneous Fully Nonlinear Elliptic Equations}, Comm. Pure Appl. Math. {\bf 64} (2011), 737--777.

\bibitem{BrNi} H. Brezis and L. Nirenberg, \emph{Removable singularities for nonlinear elliptic equations}, Topological Methods in Nonlinear Analysis {\bf 9} (1997), 201-219.

\bibitem{CC} L.A. Caffarelli, X. Cabr\'e, {\text{\lq\lq Fully Nonlinear Elliptic Equations\rq\rq}}, Colloquium Publications 43, American Mathematical Society, Providence, RI, 1995.

\bibitem {CLNI}L. A. Caffarelli, Y. Y. Li and L. Nirenberg, \emph{Some remarks on singular solutions of nonlinear elliptic equations. I},
J. Fixed Point Theory Appl. {\bf 5} (2009), 353-395.

\bibitem{CLN2} L.A. Caffarelli, Y.Y. Li and L. Nirenberg,  \emph{Some remarks on singular solutions of nonlinear elliptic equations. III: viscosity solutions, including parabolic operators}, Comm. Pure Appl. Math. \textbf{66} (2013), 109--143

\bibitem{USER} M. G. Crandall, H.Ishii and P. L. Lions, \emph{User's guide to viscosity solutions of second-order partial differential equations}, Bull. Amer. Math. Soc., {\bf 27}  (1992), 1--67.

\bibitem{DGS} E. De Giorgi and G. Stampacchia, \emph{Sulle singolarit\`{a} eliminabili delle ipersuperficie minimali}. (Italian) Atti Accad. Naz. Lincei Rend. Cl. Sci. Fis. Mat. Natur. (8) \textbf{38} (1965), 352–-357, translation into English in "Ennio De Giorgi: Selected Papers", Eds. Ambrosio, Dal Maso, Forti, Miranda, Spagnolo, Springer Berlin Heidelberg, 2006.

\bibitem{HK} W. K. Hayman and P. B. Kennedy, {\text{\lq\lq Subharmonic functions\rq\rq}}, Vol. 1, London Mathematical Society
Monographs No 9, Academic Press, London, 1976

\bibitem{HL1} R. Harvey and B. Lawson Jr., \emph{Dirichlet duality and the nonlinear Dirichlet problem}, Comm. on Pure and Applied Math. {\bf 62} (2009), 396-443.

\bibitem{HL2} R. Harvey and B. Lawson Jr., \emph{Removable singularities for nonlinear subequations}, to appear on {\it Indiana Journal of Mathematics}

\bibitem{HL3} R. Harvey and B. Lawson Jr., \emph{Plurisubharmonicity in a General Geometric Context}, Geometry and Analysis {\bf 1} (2010), 363-401.

\bibitem{K} S. Koike, \text{\lq\lq A Beginners Guide to the Theory of Viscosity Solutions\rq\rq}, MSJ Memoirs 13, Math. Soc. Japan, Tokyo, (2004).

\bibitem{Lab1} D. Labutin, \emph{Isolated Singularities for Fully Nonlinear Elliptic Equations}, J. Differential Equations {\bf 177} (2001), 49--76.

\bibitem{Lab2} D. Labutin, \emph{Singularities of viscosity solutions of fully nonlinear elliptic equations}, Viscosity Solutions of Differential Equations and Related Topics, Ishii ed., RIMS K\^{o}ky\^{u}roku No. 1287, Kyoto University, Kyoto (2002), 45--57.

\bibitem{L} N.S. Landkof, \text{\lq\lq Foundations of Modern Potential Theory\rq\rq}, Springer-Verlag, 1972.

\bibitem{P} C. Pucci, \emph{Operatori ellittici estremanti} (Italian), Ann. Mat. Pura Appl. {\bf 72} (1966), 141-170.

\bibitem{Serrin1} J. Serrin, \emph{Removable singularities of solutions of elliptic equations},  Arch. Rational Mech. Anal. {\bf 17} (1964), 67--78.

\bibitem{Serrin2} J. Serrin,\emph{ Removable singularities of solutions of elliptic equations. II},  Arch. Rational Mech. Anal. {\bf 20} (1965), 163--169.

\bibitem{Ve} L. Veron, {\text{\lq\lq Singularities of solutions of second order quasilinear equations\rq\rq}}, Longman, Harlow, 1996.

\end{thebibliography}
\end{document}